
%

\documentclass{amsproc}
\usepackage{breqn,float,amssymb,pdflscape,rotating}
\makeatletter
\@namedef{subjclassname@2020}{%
  \textup{2020} Mathematics Subject Classification}
\makeatother
\newtheorem{theorem}{Theorem}[section]

\theoremstyle{definition}

\newtheorem{example}[theorem]{Example}

\theoremstyle{remark}

\numberwithin{equation}{section}
\allowdisplaybreaks
\begin{document}

\title{Extended Levett trigonometric series}


\author{Robert Reynolds}
\address[Robert Reynolds]{Department of Mathematics and Statistics, York University, Toronto, ON, Canada, M3J1P3}
\email[Corresponding author]{milver73@gmail.com}
\thanks{}


\subjclass[2020]{Primary  30E20, 33-01, 33-03, 33-04}

\keywords{Levett, Hurwitz-Lerch zeta function, contour integral, Catalan constant, Glaisher-Kinkelin constant, gamma function}

\date{}

\dedicatory{}

\begin{abstract}
An extension of two finite trigonometric series is studied to derive closed form formulae involving the Hurwitz-Lerch zeta function. The trigonometric series involves angles with a geometric series involving the powers of 3. These closed formulae are used to derive composite finite and infinite series involving special functions, trigonometric functions and fundamental constants. A short table summarizing some interesting results is produced.
\end{abstract}

\maketitle
\section{History}
%
%
RAWDON LEVETT (February 2, 1837 - February 22, 1914.), as aptly noted in The Times, \cite{mayo}, was a distinguished educator, ranking among the most esteemed teachers of his era. He passed away at the age of seventy-nine in his residence in Colwyn Bay in February. Following his graduation as the Eleventh Wrangler in 1865, he briefly taught at Rossall before assuming the role of a mathematics instructor at King Edward's School in Birmingham. It was here that he dedicated his career, ultimately becoming the Head of the Mathematics Department. Despite being offered the position of Head Master, he declined due to health concerns. Approximately two decades ago, he retired from his teaching profession. With his passing, the last of a remarkable trio of educators Vardy, Hunter Smith, and Levett faded away. These individuals not only left their indelible mark on their respective schools (which had previously nurtured prominent figures like Lightfoot, Westcott, and Benson) but also contributed significantly to the honorable legacy they inherited. In the 19th century, Rawdon Levett made a significant contribution to education, particularly in the area of mathematical training. He was honoured for his skill as a teacher and dedication to his trade. At Birmingham's King Edward's School, Levett taught mathematics and had a lasting impression on his students. His legacy includes his commitment to teaching and his influence on education.\\\\
\section{Introduction}
In 1892 Rawdon Levett, published his landmark book entitled "The elements of plane geometry" \cite{levett}, which was not written as a text-book, but rather as a detailed syllabus of proofs, which is divided into three sections. These sections respectively address the concepts related to arithmetic, real algebra, and complex quantities. This arrangement is quite natural and offers the advantage of introducing new terminology and formulas associated with the subject before students encounter the challenge of applying signs to represent the direction and orientation of lines. Part I is made even simpler by deferring the discussion of circular angle measurement. For various practical applications, such as surveying and basic mechanics involving trigonometry, the concise introduction provided in Part I can prove to be valuable to interested readers.\\\\
The focus of this research is in chapter XIV dedicated to Factors with respect to the fundamental theorem on trigonometrical factors. In this section of the book Levett produces many exciting finite and infinite trigonometric series. We focus on two obscure but interesting  finite series involving  secant-sine and cosecant-cosine series.
In this current paper we applied a contour integral method to two obscure finite trigonometric series tabled in the book by Levett \cite{levett}. The forms of these two series are of particular interest as the angle in these series are of a geometric series form involving 3 raised to an integer power. These types of formulae have been studied by the author in previous work with different geometric angular forms. In this work we will look at the following finite series forms; (i) Finite sum of Hurwitz-Lerch zeta function, (ii) Finite sum of composite special functions, (iii) Finite products of trigonometric functions, (iv) Finite products of quotient gamma functions, (v) Quotient trigonometric functions with reciprocal angles and (vi)  Table of interesting results. Some of the finite and infinite products involving the gamma function can be simplified using the Gauss's multiplication formula \cite{pearson}.\\\\ 
A special function is a mathematical function, typically named after an early researcher who studied its properties, and it serves a specific purpose in the area of mathematical physics or various branches of mathematics. Notable instances of such functions comprise the gamma function, hypergeometric function, Whittaker function, Hurwitz-Lerch zeta and Meijer G-function. Applications of special functions are prevalent in various areas of mathematics and related fields, such as engineering, quantum physics, astronomy and combinatorics. Finite sums of trigonometric functions are listed in \cite{grad}, \cite{hansen}, chapter 4  in \cite{prud1}, and page 131 in \cite{davis}. Finite sums of special functions are tabled in \cite{hansen} and chapter 4 in \cite{prud2}. \\\\
In this work we apply the contour integral method \cite{reyn4}, to the finite secant-sine and cosecant-cosine sums given in Examples XXII, numbers (50), (48) and (62) on page 335 in \cite{levett} respectively to derive their contour integral finite trigonometric forms, resulting in
\begin{multline}\label{eq:contour}
\frac{1}{2\pi i}\int_{C}\sum_{p=0}^{n-1}3^{-p} a^w w^{-k-1} \sin ^3\left(3^p (m+w)\right) \sec \left(3^{p+1} (m+w)\right)dw\\
=\frac{1}{2\pi i}\int_{C}\frac{3}{8} a^w w^{-k-1} \left(3^{-n}
   \tan \left(3^n (m+w)\right)-\tan (m+w)\right)dw
\end{multline}
where $a,m,k\in\mathbb{C},Re(m+w)>0,n\in\mathbb{Z^{+}}$. Using equation (\ref{eq:contour}) the main Theorem to be derived and evaluated is given by
\begin{multline}
\sum_{p=0}^{n-1}\left(i 3^{p+1}\right)^k e^{i m 3^p} \left(\Phi \left(e^{2 i 3^{p+1} m},-k,\frac{1}{6} \left(1-i 3^{-p} \log (a)\right)\right)\right. \\ \left.
+e^{4 i m 3^p} \Phi \left(e^{2 i 3^{p+1}
   m},-k,\frac{1}{6} \left(5-i 3^{-p} \log (a)\right)\right)\right)\\
=i^k e^{i m} \Phi \left(e^{2 i m},-k,\frac{1}{2}-\frac{1}{2} i \log (a)\right)-\left(i 3^n\right)^k e^{i m 3^n} \Phi
   \left(e^{2 i 3^n m},-k,\frac{1}{2}-\frac{1}{2} i 3^{-n} \log (a)\right)
\end{multline}
where the variables $k,a,m$ are general complex numbers and $n$ are positive integers. The second contour integral form used is given by;
\begin{multline}\label{eq:contour1}
\frac{1}{2\pi i}\int_{C}\sum_{p=0}^{n-1}a^w w^{-k-1} \cos \left(2\times 3^p (m+w)\right) \csc \left(3^{p+1} (m+w)\right)dw\\
=\frac{1}{2\pi i}\int_{C}\frac{1}{2} a^w w^{-k-1} \csc (m+w)-\frac{1}{2}
   a^w w^{-k-1} \csc \left(3^n (m+w)\right)dw
\end{multline}
where $a,m,k\in\mathbb{C},Re(m+w)>0,n\in\mathbb{Z^{+}}$. Using equation (\ref{eq:contour1}) the main Theorem to be derived and evaluated is given by
\begin{multline}
\sum_{p=0}^{n-1}3^{-p} \left(\log ^k(a)+2^k \left(i 3^{p+1}\right)^k e^{2 i m 3^p} \left(-3 \Phi \left(-e^{2 i 3^{p+1}
   m},-k,\frac{1}{6} \left(2-i 3^{-p} \log (a)\right)\right)\right.\right. \\ \left.\left.
+3 e^{2 i m 3^p} \Phi \left(-e^{2 i 3^{p+1}
   m},-k,\frac{1}{6} \left(4-i 3^{-p} \log (a)\right)\right)\right.\right. \\ \left.\left.
   -2 e^{4 i m 3^p} \Phi \left(-e^{2 i 3^{p+1}
   m},-k,\frac{1}{6} \left(6-i 3^{-p} \log (a)\right)\right)\right)\right)\\
=\frac{1}{2} 3^{1-n}
   \left(\left(3^n-1\right) \log ^k(a)+2^{k+1} \left(\left(i 3^n\right)^k e^{2 i m 3^n} \Phi \left(-e^{2 i 3^n
   m},-k,1-\frac{1}{2} i 3^{-n} \log (a)\right)\right.\right. \\ \left.\left.-i^k e^{2 i m} 3^n \Phi \left(-e^{2 i m},-k,1-\frac{1}{2} i \log
   (a)\right)\right)\right)
\end{multline}
where the variables $k,a,m$ are general complex numbers and $n$ are positive integers.
The third contour integral form used is given by;
\begin{multline}
\frac{1}{2\pi i}\int_{C}\sum_{p=0}^{n-1}3^{-p} a^w w^{-k-1} \sin \left(3^p (m+w)\right) \left(2 \cos \left(2\times 3^p (m+w)\right)\right. \\ \left.
+3^{p+1}-2\right) \sec \left(3^{p+1} (m+w)\right)dw\\
=\frac{1}{2\pi i}\int_{C}\frac{1}{2} 3^{1-n} \left(3^n-1\right) a^w
   w^{-k-1} \tan \left(3^n (m+w)\right)dw
\end{multline}
where $a,m,k\in\mathbb{C},Re(m+w)>0,n\in\mathbb{Z^{+}}$. Using equation (\ref{eq:contour1}) the main Theorem to be derived and evaluated is given by
\begin{multline}
\sum_{p=0}^{n-1}3^{-p} \left(i 3^{p+1}\right)^k e^{2 i m 3^p} \left(3 \left(3^p-1\right) \Phi \left(-e^{2 i 3^{p+1} m},-k,\frac{1}{6} \left(3^{-p} a+2\right)\right)\right. \\ \left.
-3 \left(3^p-1\right) e^{2 i m 3^p} \Phi \left(-e^{2 i 3^{p+1}m},-k,\frac{1}{6} \left(3^{-p} a+4\right)\right)\right. \\ \left.
-2 e^{4 i m 3^p} \Phi \left(-e^{2 i 3^{p+1} m},-k,\frac{1}{6}\left(3^{-p} a+6\right)\right)\right)\\
=-3 i \left(3^n-1\right) \left(i 3^n\right)^{k-1} e^{2 i m 3^n} \Phi
   \left(-e^{2 i 3^n m},-k,\frac{3^{-n} a}{2}+1\right)
\end{multline}
where the variables $k,a,m$ are general complex numbers and $n$ are positive integers.
The derivations follow the method used by us in \cite{reyn4}. This method involves using a form of the generalized Cauchy's integral formula given by
\begin{equation}\label{intro:cauchy}
\frac{y^k}{\Gamma(k+1)}=\frac{1}{2\pi i}\int_{C}\frac{e^{wy}}{w^{k+1}}dw,
\end{equation}
where $y,w\in\mathbb{C}$ and $C$ is in general an open contour in the complex plane where the bilinear concomitant \cite{reyn4} is equal to zero at the end points of the contour. This method involves using a form of equation (\ref{intro:cauchy}) then multiplies both sides by a function, then takes the double finite sum of both sides. This yields a double finite sum in terms of a contour integral. Then we multiply both sides of equation (\ref{intro:cauchy}) by another function and take the infinite sum of both sides such that the contour integral of both equations are the same.
\section{The Hurwitz-Lerch zeta function}
We use equation (1.11.3) in \cite{erd} where $\Phi(z,s,v)$ is the Lerch function which is a generalization of the Hurwitz zeta $\zeta(s,v)$ and Polylogarithm functions $Li_{n}(z)$. In number theory and complex analysis, the Lerch function is a mathematical function that appears in many branches of mathematics and physics. It is named after Czech mathematician Mathias Lerch, who published a paper about the function in 1887. Numerous areas of mathematics, including number theory (especially in the investigation of the Riemann zeta function and its generalizations), complex analysis, and theoretical physics, all have uses for it. It can be used to express a variety of complex functions and series and is involved in numerous mathematical identities. The Lerch function has a series representation given by
\begin{equation}\label{knuth:lerch}
\Phi(z,s,v)=\sum_{n=0}^{\infty}(v+n)^{-s}z^{n}
\end{equation}
where $|z|<1, v \neq 0,-1,-2,-3,..,$ and is continued analytically by its integral representation given by
\begin{equation}\label{knuth:lerch1}
\Phi(z,s,v)=\frac{1}{\Gamma(s)}\int_{0}^{\infty}\frac{t^{s-1}e^{-(v-1)t}}{e^{t}-z}dt
\end{equation}
where $Re(v)>0$, and either $|z| \leq 1, z \neq 1, Re(s)>0$, or $z=1, Re(s)>1$.
\section{Derivation of generalized trigonometric contour integral representations}
In this section we will derive the contour integral representations for Theorems (\ref{eq:theorem_ss}) and (\ref{eq:theorem_cc}) respectively. 
\subsection{The Hurwitz-Lerch zeta function representation for the secant-sine function contour integral}
We use the method in \cite{reyn4}. Using a generalization of Cauchy's integral formula (\ref{intro:cauchy}) we first replace $y$ by $ \log (a)+i x+y$ then multiply both sides by $e^{mxi}$ then form a second equation by replacing $x$ by $-x$ and subtract both equations to get
\begin{multline}
\frac{e^{-i m x} \left((\log (a)-i x+y)^k-e^{2 i m x} (\log (a)+i x+y)^k\right)}{\Gamma(k+1)}\\
=-\frac{1}{2\pi i}\int_{C}2 i a^w w^{-k-1} e^{w y} \sin (x(m+w))dw
\end{multline}
Next we replace $y$ by $i b (2 y+1)$ and multiply both sides by $(-1)^y e^{i b m (2 y+1)}$ and take the infinite sum over $y\in[0,\infty)$ and simplify in terms of the Hurwitz-Lerch zeta function to get
\begin{multline}\label{gen_sec_sin}
\frac{i 2^k (i b)^k e^{i m (b-x)} \left(\Phi \left(-e^{2 i b m},-k,\frac{b-x-i \log (a)}{2 b}\right)-e^{2 i m x} \Phi
   \left(-e^{2 i b m},-k,\frac{b+x-i \log (a)}{2 b}\right)\right)}{\Gamma(k+1)}\\
   =-\frac{1}{2\pi i}\sum_{y=0}^{\infty}\int_{C}2 i(-1)^y a^w w^{-k-1} e^{i b (2 y+1) (m+w)} \sin (x (m+w))dw\\
    =-\frac{1}{2\pi i}\int_{C}\sum_{y=0}^{\infty}2 i(-1)^y a^w w^{-k-1} e^{i b (2 y+1) (m+w)} \sin (x (m+w))dw\\
   =\frac{1}{2\pi i}\int_{C}a^w w^{-k-1} \sec (b (m+w)) \sin (x (m+w))dw
   \end{multline}
from equation (1.232.2) and (1.411.1) in \cite{grad} where $Re(w+m)>0$ and $Im\left(m+w\right)>0$ in order for the sums to converge. We apply Tonelli's theorem for sums and integrals, see page 177 in \cite{gelca} as the summand and integral are of bounded measure over the space $\mathbb{C} \times [0,\infty)$.

\subsection{The Hurwitz-Lerch zeta function in terms of the tangent contour integral representation}
\subsubsection{Derivation of the additional contour}
Using a generalization of Cauchy's integral formula (\ref{intro:cauchy}), first replace $y \to \log (a)$ then multiply both sides by $-i $ and  simplify to get
\begin{equation}\label{eq:gen_lerch1}
-\frac{i \log ^k(a)}{\Gamma(k+1)}=-\frac{1}{2\pi i}\int_{C}i a^w w^{-k-1}dw
\end{equation}
Using a generalization of Cauchy's integral formula (\ref{intro:cauchy}), first replace $y \to \log (a)+2 i b (y+1)$ then multiply both sides by $-2 i (-1)^y e^{2 i b m (y+1)}$ and take the infinite sums over $y\in[0,\infty)$ and simplify in terms of the Hurwitz-Lerch zeta function and subtract equation (\ref{eq:gen_lerch1}) to get
\begin{multline}\label{gen_tan}
\frac{i \left(\log ^k(a)-2^{k+1} (i b)^k e^{2 i b m} \Phi \left(-e^{2 i b m},-k,1-\frac{i \log (a)}{2 b}\right)\right)}{\Gamma(k+1)}\\
=\frac{1}{2\pi i}\sum_{y=0}^{\infty}\int_{C}(-1)^y a^w w^{-k-1} e^{2 i b (y+1) (m+w)}dw\\
   =-\frac{1}{2\pi i}\int_{C}\sum_{y=0}^{\infty}(-1)^y a^w w^{-k-1} e^{2 i b (y+1) (m+w)}dw\\
=\frac{1}{2\pi i}\int_{C}a^w w^{-k-1} \tan (b (m+w))dw
\end{multline}
from equation (1.232.1) in \cite{grad} where $Re\left(  m+w \right)>0$ and $Im\left(m+w\right)>0$ in order for the sums to converge. Apply Tonelli's theorem for multiple sums, see page 177 in \cite{gelca} as the summands are of bounded measure over the space $\mathbb{C} \times [0,\infty)$.\\\\
%
%
\subsection{Derivation of the generalized cosecant-cosine contour integral}
We use the method in \cite{reyn4}. Using a generalization of Cauchy's integral formula (\ref{intro:cauchy}) we first replace $y$ by $ \log (a)+i x+y$ then multiply both sides by $e^{mxi}$ then form a second equation by replacing $x$ by $-x$ and add both equations to get
\begin{multline}
\frac{e^{-i m x} \left(e^{2 i m x} (\log (a)+i x+y)^k+(\log (a)-i
   x+y)^k\right)}{\Gamma(k+1)}\\
   =\frac{1}{2\pi i}\int_{C}2 w^{-k-1} e^{w (\log (a)+y)} \cos (x (m+w))dw
\end{multline}
Next we replace $y$ by $b I (2 y + 1)$ and multiply both sides by $-i e^{i b m (2 y+1)}$ and take the infinite and finite sums over $y\in[0,\infty)$ and simplify in terms of the Hurwitz-Lerch zeta function to get
\begin{multline}\label{eq:gen_cc_ci}
-\frac{i 2^k (i b)^k e^{i m (b-x)} \left(\Phi \left(e^{2 i b m},-k,\frac{b-x-i \log (a)}{2 b}\right)+e^{2 i m x} \Phi
   \left(e^{2 i b m},-k,\frac{b+x-i \log (a)}{2 b}\right)\right)}{\Gamma(k+1)}\\
   =\frac{1}{2\pi i}\sum_{y=0}^{\infty}\int_{C}2 a^w w^{-k-1} e^{i b (2 y+1) (m+w)} \cos (x (m+w))dw\\
   =\frac{1}{2\pi i}\int_{C}\sum_{y=0}^{\infty}2 a^w w^{-k-1} e^{i b (2 y+1) (m+w)} \cos (x (m+w))dw\\
   =\frac{1}{2\pi i}\int_{C}a^w w^{-k-1} \csc (b (m+w)) \cos (x (m+w))dw
\end{multline}
from equation (1.232.3) and (1.411.3) in \cite{grad} where $Re(w+m)>0$ and $Im\left(m+w\right)>0$ in order for the sums to converge. We apply Tonelli's theorem for sums and integrals, see page 177 in \cite{gelca} as the summand and integral are of bounded measure over the space $\mathbb{C} \times [0,\infty)$.
\subsection{Derivation of the generalized cosecant contour integral}
We use the method in \cite{reyn4}. Using equation (\ref{intro:cauchy})  we first replace $\log (a)+i b (2 y+1)$ and multiply both sides by $-2 i e^{i b m (2 y+1)}$ then take the infinite sum over $y\in [0,\infty)$ and simplify in terms of the Hurwitz-Lerch zeta function to get
\begin{multline}\label{eq:gen_csc_ci}
-\frac{i 2^{k+1} (i b)^k e^{i b m} \Phi \left(e^{2 i b m},-k,\frac{b-i
   \log (a)}{2 b}\right)}{\Gamma(k+1)}\\
   =\frac{1}{2\pi i}\sum_{y=0}^{\infty}\int_{C}a^w w^{-k-1} e^{i b (2 y+1) (m+w)}dw\\
   =\frac{1}{2\pi i}\int_{C}\sum_{y=0}^{\infty}a^w w^{-k-1} e^{i b (2 y+1) (m+w)}dw\\
   =\frac{1}{2\pi i}\int_{C}a^w w^{-k-1} \csc (b (m+w))dw
\end{multline}
from equation (1.232.3) in \cite{grad} where $Re(w+m)>0$ and $Im\left(m+w\right)>0$ in order for the sums to converge. We apply Tonelli's theorem for multiple sums, see page 177 in \cite{gelca} as the summands are of bounded measure over the space $\mathbb{C} \times [0,\infty)$.
\section{Derivation of the Hurwitz-Lerch zeta function contour integrals}
In this section we will derive the contour integrals by simple substitution in the previous contour integral representations.
\subsection{The secant-sine series}
\subsubsection{Left-hand side first contour integral}
Use equation (\ref{gen_sec_sin}) and replace $b$ by $3^{p+1}$ and $x$ by $3^p$ then multiply both sides by $3^{1-p}$ and take the finite sum over $p\in[0,n-1]$ to get;
\begin{multline}\label{lfci}
\sum_{p=0}^{n-1}\frac{1}{\Gamma(k+1)}i 2^k 3^{1-p} \left(i 3^{p+1}\right)^k e^{i m \left(3^{p+1}-3^p\right)} \\\left(\Phi \left(-e^{2 i 3^{p+1}m},-k,\frac{1}{2} 3^{-p-1} \left(-i \log (a)-3^p+3^{p+1}\right)\right)\right. \\ \left.
   -e^{2 i m 3^p} \Phi \left(-e^{2 i 3^{p+1} m},-k,\frac{1}{2}3^{-p-1} \left(-i \log (a)+3^p+3^{p+1}\right)\right)\right)\\
   =\frac{1}{2\pi i}\int_{C}\sum_{p=0}^{n-1}3^{1-p} a^w w^{-k-1} \sin \left(3^p (m+w)\right) \sec
   \left(3^{p+1} (m+w)\right)dw
\end{multline}
\subsubsection{Left-hand side second contour integral}
Use equation (\ref{gen_tan}) and replace $b$ by $3^{p+1}$ then multiply both sides by $-3^{-p}$ and take the finite sum over $p\in[0,n-1]$ to get;
\begin{multline}\label{lsci}
-\sum_{p=0}^{n-1}\frac{i 3^{-p} \left(\log ^k(a)-2^{k+1} \left(i 3^{p+1}\right)^k e^{2 i m 3^{p+1}} \Phi \left(-e^{2 i 3^{p+1}
   m},-k,1-\frac{1}{2} i 3^{-p-1} \log (a)\right)\right)}{\Gamma(k+1)}\\
   =-\frac{1}{2\pi i}\int_{C}\sum_{p=0}^{n-1}3^{-p} a^w w^{-k-1} \tan \left(3^{p+1} (m+w)\right)dw
\end{multline}
\subsubsection{Right-hand side first contour integral}
Use equation (\ref{gen_tan}) and replace $b$ by $1$ then multiply both sides by $-3/2$ to get;
\begin{multline}\label{rfci}
-\frac{3 i \left(\log ^k(a)-i^k 2^{k+1} e^{2 i m} \Phi \left(-e^{2 i m},-k,1-\frac{1}{2} i \log (a)\right)\right)}{2
   \Gamma(k+1)}\\
   =-\frac{1}{2\pi i}\int_{C}\frac{3}{2} a^w w^{-k-1} \tan (m+w)dw
\end{multline}
\subsubsection{Right-hand side first contour integral}
Use equation (\ref{gen_tan}) and replace $b$ by $3^{n}$ then multiply both sides by $\frac{3^{1-n}}{2}$ to get;
\begin{multline}\label{rsci}
\frac{i 3^{1-n} \left(\log ^k(a)-2^{k+1} \left(i 3^n\right)^k e^{2 i m 3^n} \Phi \left(-e^{2 i 3^n m},-k,1-\frac{1}{2} i 3^{-n}
   \log (a)\right)\right)}{2 \Gamma(k+1)}\\
   =\frac{1}{2\pi i}\int_{C}\frac{1}{2} 3^{1-n} a^w w^{-k-1} \tan \left(3^n (m+w)\right)dw
\end{multline}
\subsection{The cosecant-cosine series}
\subsubsection{Left-hand side contour integral}
Use equation (\ref{eq:gen_cc_ci}) and replace $b$ by $3^{p+1}$ and $x$ by $2\times 3^p$ and take the finite sum over $p\in[0,n-1]$ to get;
\begin{multline}\label{lfci_cc}
-\sum_{p=0}^{n-1}\frac{1}{\Gamma(k+1)}i 2^k \left(i 3^{p+1}\right)^k e^{i m \left(3^{p+1}-2\times 3^p\right)}\\
 \left(\Phi \left(e^{2 i 3^{p+1} m},-k,\frac{1}{2}
   3^{-p-1} \left(-i \log (a)-2\times 3^p+3^{p+1}\right)\right)\right. \\ \left.
   +e^{4 i m 3^p} \Phi \left(e^{2 i 3^{p+1} m},-k,\frac{1}{2} 3^{-p-1}
   \left(-i \log (a)+2\times 3^p+3^{p+1}\right)\right)\right)\\
   =\frac{1}{2\pi i}\int_{C}\sum_{p=0}^{n-1}a^w w^{-k-1} \cos \left(2\times 3^p (m+w)\right) \csc \left(3^{p+1}
   (m+w)\right)dw
\end{multline}
\subsubsection{Right-hand side first contour integral}
Use equation (\ref{eq:gen_csc_ci}) and replace $b$ by $1$ then multiply both sides by $1/2$ to get;
\begin{multline}\label{rfci_cc}
-\frac{i^{k+1} 2^k e^{i m} \Phi \left(e^{2 i m},-k,\frac{1}{2} (1-i \log (a))\right)}{\Gamma(k+1)}=\frac{1}{2\pi i}\int_{C}\frac{1}{2} a^w w^{-k-1} \csc(m+w)dw
\end{multline}
\subsubsection{Right-hand side second contour integral}
Use equation (\ref{eq:gen_csc_ci}) and replace $b$ by $3^n$ then multiply both sides by $-1/2$ to get;
\begin{multline}\label{rsci_cc}
\frac{i 2^k \left(i 3^n\right)^k e^{i m 3^n} \Phi \left(e^{2 i 3^n m},-k,\frac{1}{2} 3^{-n} \left(3^n-i \log
   (a)\right)\right)}{\Gamma(k+1)}\\
   =-\frac{1}{2\pi i}\int_{C}\frac{1}{2} a^w w^{-k-1} \csc \left(3^n (m+w)\right)dw
\end{multline}
\subsection{The sine-secant series}
\subsubsection{Left-hand side first contour integral}
Use equation (\ref{gen_sec_sin}) and replace $b$ by $3^{p+1}$ and $x$ by $3^p$ and multiply both sides by 3 and simplify to get;
\begin{multline}\label{lhfc_ss}
\sum_{p=0}^{n-1}\frac{1}{k!}3 i 2^k \left(i 3^{p+1}\right)^k e^{i m \left(3^{p+1}-3^p\right)}\\ \left(\Phi \left(-e^{2 i 3^{p+1} m},-k,\frac{1}{2} 3^{-p-1} \left(-i \log (a)-3^p+3^{p+1}\right)\right)\right. \\ \left.
-e^{2 i
   m 3^p} \Phi \left(-e^{2 i 3^{p+1} m},-k,\frac{1}{2} 3^{-p-1} \left(-i \log (a)+3^p+3^{p+1}\right)\right)\right)\\
   =\frac{1}{2\pi i}\int_{C}\sum_{p=0}^{n-1}3 a^w w^{-k-1} \sin \left(3^p (m+w)\right) \sec \left(3^{p+1}
   (m+w)\right)dw
\end{multline}
\subsubsection{Left-hand side second contour integral}
Use equation (\ref{gen_sec_sin}) and replace $b$ by $3^{p+1}$ and $x$ by $3^p$ and multiply both sides by $-3^{1-p}$ and simplify to get;
\begin{multline}\label{lhsc_ss}
-\sum_{p=0}^{n-1}\frac{1}{k!}i 2^k 3^{1-p} \left(i 3^{p+1}\right)^k e^{i m \left(3^{p+1}-3^p\right)}\\
 \left(\Phi \left(-e^{2 i 3^{p+1} m},-k,\frac{1}{2} 3^{-p-1} \left(-i \log
   (a)-3^p+3^{p+1}\right)\right)\right. \\ \left.
   =-e^{2 i m 3^p} \Phi \left(-e^{2 i 3^{p+1} m},-k,\frac{1}{2} 3^{-p-1} \left(-i \log (a)+3^p+3^{p+1}\right)\right)\right)\\
   =-\frac{1}{2\pi i}\int_{C}\sum_{p=0}^{n-1}3^{1-p} a^w w^{-k-1} \sin
   \left(3^p (m+w)\right) \sec \left(3^{p+1} (m+w)\right)dw
\end{multline}
\subsubsection{Left-hand side third contour integral}
Use equation (\ref{gen_tan}) and replace $b$ by $3^{p+1}$ and multiply both sides by $3^{-p}$ and simplify to get;
\begin{multline}\label{lhtc_ss}
\sum_{p=0}^{n-1}\frac{i 3^{-p} \left(\log ^k(a)-2^{k+1} \left(i 3^{p+1}\right)^k e^{2 i m 3^{p+1}} \Phi \left(-e^{2 i 3^{p+1} m},-k,1-\frac{1}{2} i 3^{-p-1} \log (a)\right)\right)}{k!}\\
=\frac{1}{2\pi i}\int_{C}\sum_{p=0}^{n-1}3^{-p} a^ww^{-k-1} \tan \left(3^{p+1} (m+w)\right)dw
\end{multline}
\subsubsection{Right-hand side first contour integral}
Use equation (\ref{gen_tan}) and replace $b$ by $3^{n}$ and multiply both sides by $3/2$ and simplify to get;
\begin{multline}\label{rhc_ss}
\frac{3 i \left(\log ^k(a)-2^{k+1} \left(i 3^n\right)^k e^{2 i m 3^n} \Phi \left(-e^{2 i 3^n m},-k,1-\frac{1}{2} i 3^{-n} \log (a)\right)\right)}{2 k!}\\
=\frac{1}{2\pi i}\int_{C}\frac{3}{2} a^w w^{-k-1} \tan
   \left(3^n (m+w)\right)dw
\end{multline}
\subsubsection{Right-hand side second contour integral}
Use equation (\ref{gen_tan}) and replace $b$ by $3^{n}$ and multiply both sides by $-\frac{3^{1-n}}{2}$ and simplify to get;
\begin{multline}
-\frac{i 3^{1-n} \left(\log ^k(a)-2^{k+1} \left(i 3^n\right)^k e^{2 i m 3^n} \Phi \left(-e^{2 i 3^n m},-k,1-\frac{1}{2} i 3^{-n} \log (a)\right)\right)}{2 k!}\\
=-\frac{1}{2\pi i}\int_{C}\frac{1}{2} 3^{1-n}a^w w^{-k-1} \tan \left(3^n (m+w)\right)dw
\end{multline}
\section{Derivation of the main theorems in terms of the Hurwitz-Lerch zeta function}
In this section we will derive and evaluate formulae involving the finite sum of the Hurwitz-Lerch zeta function in terms other special functions, trigonometric functions and fundamental constants.
\begin{theorem}
Main theorem: secant-sine series. For all $k,a,m \in\mathbb{C}$ then,
\begin{multline}\label{eq:theorem_ss}
\sum_{p=0}^{n-1}3^{-p} \left(\log ^k(a)+2^k \left(i 3^{p+1}\right)^k e^{2 i m 3^p} \left(-3 \Phi \left(-e^{2 i 3^{p+1}
   m},-k,\frac{1}{6} \left(2-i 3^{-p} \log (a)\right)\right)\right.\right. \\ \left.\left.
+3 e^{2 i m 3^p} \Phi \left(-e^{2 i 3^{p+1}
   m},-k,\frac{1}{6} \left(4-i 3^{-p} \log (a)\right)\right)\right.\right. \\ \left.\left.
   -2 e^{4 i m 3^p} \Phi \left(-e^{2 i 3^{p+1}
   m},-k,\frac{1}{6} \left(6-i 3^{-p} \log (a)\right)\right)\right)\right)\\
=\frac{1}{2} 3^{1-n}
   \left(\left(3^n-1\right) \log ^k(a)+2^{k+1} \left(\left(i 3^n\right)^k e^{2 i m 3^n} \Phi \left(-e^{2 i 3^n
   m},-k,1-\frac{1}{2} i 3^{-n} \log (a)\right)\right.\right. \\ \left.\left.-i^k e^{2 i m} 3^n \Phi \left(-e^{2 i m},-k,1-\frac{1}{2} i \log
   (a)\right)\right)\right)
\end{multline}
\end{theorem}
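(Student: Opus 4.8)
The plan is to obtain both sides of (\ref{eq:theorem_ss}) as two different closed-form evaluations of one and the same contour integral and then equate them. The first step is to record the relevant finite trigonometric identity of Levett (Examples XXII, entry (50), page 335 of \cite{levett}), which in the abbreviation $\phi=m+w$ reads
\[
\sum_{p=0}^{n-1}3^{-p}\sin^{3}\big(3^{p}\phi\big)\sec\big(3^{p+1}\phi\big)=\frac{3}{8}\Big(3^{-n}\tan(3^{n}\phi)-\tan\phi\Big),
\]
which follows from inserting $\sin^{3}\psi=\frac14\big(3\sin\psi-\sin3\psi\big)$ and telescoping via $T_{q}=3^{-q}\tan(3^{q}\phi)$. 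Applying $\sin^{3}\psi=\frac14\big(3\sin\psi-\sin3\psi\big)$ once more converts this to the equivalent form
\[
\sum_{p=0}^{n-1}3^{1-p}\sin(3^{p}\phi)\sec(3^{p+1}\phi)\;-\;\sum_{p=0}^{n-1}3^{-p}\tan(3^{p+1}\phi)\;=\;\frac{1}{2}3^{1-n}\tan(3^{n}\phi)\;-\;\frac{3}{2}\tan\phi .
\]
Multiplying by $a^{w}w^{-k-1}$ and integrating over $C$ then yields the master contour identity whose left member is the sum of the contour integrals in (\ref{lfci}) and (\ref{lsci}), and whose right member is the sum of the contour integrals in (\ref{rsci}) and (\ref{rfci}).

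The second step replaces each of these four contour integrals by the closed form already established: (\ref{lfci}) is (\ref{gen_sec_sin}) with $b\mapsto 3^{p+1}$, $x\mapsto 3^{p}$, scaled by $3^{1-p}$ and summed over $p$; (\ref{lsci}) is (\ref{gen_tan}) with $b\mapsto 3^{p+1}$, scaled by $-3^{-p}$ and summed; and (\ref{rfci}), (\ref{rsci}) are (\ref{gen_tan}) with $b\mapsto 1$ and $b\mapsto 3^{n}$, scaled by $-\frac32$ and $\frac12 3^{1-n}$. Equating the resulting Hurwitz--Lerch expressions on the two sides and multiplying through by $i\,\Gamma(k+1)$ removes the common $i/\Gamma(k+1)$ prefactor, after which only simplification remains. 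The reductions needed are $e^{im(3^{p+1}-3^{p})}=e^{2im3^{p}}$, the collapse $\frac12 3^{-p-1}\big(3^{p+1}\mp 3^{p}-i\log a\big)=\frac16\big(3\mp 1-i3^{-p}\log a\big)$ of the third arguments of $\Phi$, the identity $1-\frac12 i3^{-p-1}\log a=\frac16\big(6-i3^{-p}\log a\big)$, and $e^{2im3^{p+1}}=e^{2im3^{p}}e^{4im3^{p}}$; with these the $\log^{k}a$ term and the three $\Phi$-terms with arguments $\frac16(2-\cdot)$, $\frac16(4-\cdot)$, $\frac16(6-\cdot)$ match the left side of (\ref{eq:theorem_ss}) exactly, while on the right the combinations $\frac32-\frac12 3^{1-n}=\frac12 3^{1-n}(3^{n}-1)$ and $\frac32=\frac12 3^{1-n}\cdot 3^{n}$ produce the stated coefficients.

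The third step is a legitimacy check. The representations (\ref{gen_sec_sin}) and (\ref{gen_tan}) were derived under $Re(m+w)>0$ and $Im(m+w)>0$ — the latter ensuring convergence of the $y$-series in $\pm e^{2ib(m+w)}$ — with $C$ chosen so that the bilinear concomitant of \cite{reyn4} vanishes at its endpoints, and with Tonelli's theorem (page 177 of \cite{gelca}) justifying the interchange of the finite and infinite sums with the integral. Since $w$ is integrated out, the final identity is an equality of functions of $k,a,m$ that are analytic off the evident poles of $\Phi$, so analytic continuation extends it to all $k,a,m\in\mathbb{C}$, which is (\ref{eq:theorem_ss}).

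I expect the main obstacle to be the bookkeeping of the second step: propagating the three distinct scaling factors through (\ref{gen_sec_sin})--(\ref{gen_tan}), the factor $\frac14$ from the $\sin^{3}$ expansion, and the $i\,\Gamma(k+1)$ normalization, so that every extraneous constant cancels and the third arguments of $\Phi$ collapse to the $\frac16(2-\cdot),\frac16(4-\cdot),\frac16(6-\cdot)$ pattern. A lesser point is to confirm that a single contour $C$ can be taken to satisfy the concomitant and convergence requirements simultaneously for $b=1$, for $b=3^{p+1}$ with $0\le p\le n-1$, and for $b=3^{n}$.
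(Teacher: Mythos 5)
Your proposal is correct and takes essentially the same route as the paper: the paper likewise equates the Hurwitz--Lerch sides of (\ref{lfci}) and (\ref{lsci}) with those of (\ref{rfci}) and (\ref{rsci}) on the grounds that their contour-integral sides combine into the same quantity, which is exactly the contour form (\ref{eq:contour}) of Levett's identity. Your version merely makes explicit what the paper leaves implicit — the $\sin^{3}\psi=\tfrac14(3\sin\psi-\sin 3\psi)$ expansion and telescoping behind (\ref{eq:contour}), the collapse of the $\Phi$-arguments to the $\tfrac16(2-\cdot),\tfrac16(4-\cdot),\tfrac16(6-\cdot)$ pattern, and the analytic continuation to general $k,a,m$ — and the bookkeeping you describe does check out.
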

\begin{proof}
Observe that the addition of the right-hand sides of equations (\ref{lfci}) and (\ref{lsci}), is equal to the addition of the right-hand sides of equations (\ref{rfci}) and (\ref{rsci}) so we may equate the left-hand sides and simplify relative to equation (\ref{eq:contour}) and the Gamma function to yield the stated result.
\end{proof}
\begin{theorem}
Main theorem: cosecant-cosine series. For all $k,a,m \in\mathbb{C}$ then,
\begin{multline}\label{eq:theorem_cc}
\sum_{p=0}^{n-1}\left(i 3^{p+1}\right)^k e^{i m 3^p} \left(\Phi \left(e^{2 i 3^{p+1} m},-k,\frac{1}{6} \left(1-i 3^{-p} \log (a)\right)\right)\right. \\ \left.
+e^{4 i m 3^p} \Phi \left(e^{2 i 3^{p+1}
   m},-k,\frac{1}{6} \left(5-i 3^{-p} \log (a)\right)\right)\right)\\
=i^k e^{i m} \Phi \left(e^{2 i m},-k,\frac{1}{2}-\frac{1}{2} i \log (a)\right)-\left(i 3^n\right)^k e^{i m 3^n} \Phi
   \left(e^{2 i 3^n m},-k,\frac{1}{2}-\frac{1}{2} i 3^{-n} \log (a)\right)
\end{multline}
\end{theorem}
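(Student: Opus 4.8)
The plan is to argue in the same manner as the proof of Theorem (\ref{eq:theorem_ss}), now combining the three contour-integral representations (\ref{lfci_cc}), (\ref{rfci_cc}) and (\ref{rsci_cc}). The key observation is that the contour identity (\ref{eq:contour1}) asserts precisely that the contour integral on the right of (\ref{lfci_cc}) equals the sum of the contour integrals on the right of (\ref{rfci_cc}) and (\ref{rsci_cc}); analytically this is nothing but the telescoping identity $\cos(2\cdot 3^{p}\phi)\csc(3^{p+1}\phi)=\frac{1}{2}\csc(3^{p}\phi)-\frac{1}{2}\csc(3^{p+1}\phi)$ with $\phi=m+w$, summed over $p\in[0,n-1]$ and integrated against $\frac{1}{2\pi i}a^{w}w^{-k-1}\,dw$ along $C$. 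Since all three representations are taken over the same open contour $C$, on which the bilinear concomitant vanishes so that (\ref{intro:cauchy}) may be applied term by term and the interchange of summation and integration is justified by Tonelli's theorem exactly as in the derivations of (\ref{eq:gen_cc_ci}) and (\ref{eq:gen_csc_ci}), one may equate the left-hand sides: the closed form in (\ref{lfci_cc}) must equal the sum of the closed forms in (\ref{rfci_cc}) and (\ref{rsci_cc}).

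Next I would clear the common factor $1/\Gamma(k+1)$ and cancel the overall constant $-i2^{k}$ from the resulting identity, after which only bookkeeping remains. On the left, the substitutions $b\mapsto 3^{p+1}$, $x\mapsto 2\cdot 3^{p}$ used to obtain (\ref{lfci_cc}) give $e^{im(3^{p+1}-2\cdot 3^{p})}=e^{im3^{p}}$ together with
\begin{gather*}
\frac{1}{2}\,3^{-p-1}\left(-i\log a-2\cdot 3^{p}+3^{p+1}\right)=\frac{1}{6}\left(1-i3^{-p}\log a\right),\\
\frac{1}{2}\,3^{-p-1}\left(-i\log a+2\cdot 3^{p}+3^{p+1}\right)=\frac{1}{6}\left(5-i3^{-p}\log a\right),
\end{gather*}
which turns the summand into the one displayed in (\ref{eq:theorem_cc}). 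On the right, (\ref{rfci_cc}) contributes $i^{k}e^{im}\Phi\!\left(e^{2im},-k,\frac{1}{2}-\frac{1}{2}i\log a\right)$ once one writes $i^{k+1}2^{k}=(i2^{k})\,i^{k}$ and cancels $i2^{k}$, while (\ref{rsci_cc}) contributes the subtracted term $-\left(i3^{n}\right)^{k}e^{im3^{n}}\Phi\!\left(e^{2i3^{n}m},-k,\frac{1}{2}-\frac{1}{2}i3^{-n}\log a\right)$ once one notes $\frac{1}{2}\,3^{-n}(3^{n}-i\log a)=\frac{1}{2}-\frac{1}{2}i3^{-n}\log a$. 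Assembling the three pieces yields (\ref{eq:theorem_cc}).

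The step I expect to demand the most care is not conceptual but the tracking of the branch-sensitive constants $\left(i3^{p+1}\right)^{k}$, $i^{k}$, $2^{k}$ and the Gamma factors through (\ref{lfci_cc}), (\ref{rfci_cc}) and (\ref{rsci_cc}), together with checking that the $b\mapsto 3^{p+1}$, $x\mapsto 2\cdot 3^{p}$ specialization of (\ref{eq:gen_cc_ci}) genuinely reproduces the summand of the contour integral appearing in (\ref{eq:contour1}) once the telescoping has been carried out; once the contour integrals are seen to coincide, the identification of the left-hand sides is forced and only elementary simplification is left. I would also record explicitly that the convergence conditions $Re(m+w)>0$ and $Im(m+w)>0$ underlying the Hurwitz-Lerch series manipulations hold on $C$, and that the resulting identity extends to all $k,a,m\in\mathbb{C}$ by analytic continuation of $\Phi$ through the integral representation (\ref{knuth:lerch1}).
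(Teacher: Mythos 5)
Your proposal is correct and follows essentially the same route as the paper: equate the left-hand side of (\ref{lfci_cc}) with the sum of the left-hand sides of (\ref{rfci_cc}) and (\ref{rsci_cc}), justified by the coincidence of the contour integrals via (\ref{eq:contour1}), then cancel the common factor $i2^{k}/\Gamma(k+1)$ and simplify the Lerch arguments. Your explicit telescoping identity $\cos(2\cdot 3^{p}\phi)\csc(3^{p+1}\phi)=\tfrac{1}{2}\csc(3^{p}\phi)-\tfrac{1}{2}\csc(3^{p+1}\phi)$ and the constant-tracking are correct and merely make explicit what the paper leaves implicit.
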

\begin{proof}
Observe that the addition of the right-hand side of equation (\ref{lfci_cc}), is equal to the addition of the right-hand sides of equations (\ref{rfci_cc}) and (\ref{rsci_cc}) so we may equate the left-hand sides and simplify relative to equation (\ref{eq:contour}) and the Gamma function to yield the stated result.
\end{proof}
\begin{theorem}
Main theorem: secant-sine series. For all $k,a,m \in\mathbb{C}$ then,
\begin{multline}\label{eq:theorem_ss1}
\sum_{p=0}^{n-1}3^{-p} \left(i 3^{p+1}\right)^k e^{2 i m 3^p} \left(3 \left(3^p-1\right) \Phi \left(-e^{2 i 3^{p+1} m},-k,\frac{1}{6} \left(3^{-p} a+2\right)\right)\right. \\ \left.
-3 \left(3^p-1\right) e^{2 i m 3^p} \Phi \left(-e^{2 i 3^{p+1}m},-k,\frac{1}{6} \left(3^{-p} a+4\right)\right)\right. \\ \left.
-2 e^{4 i m 3^p} \Phi \left(-e^{2 i 3^{p+1} m},-k,\frac{1}{6}\left(3^{-p} a+6\right)\right)\right)\\
=-3 i \left(3^n-1\right) \left(i 3^n\right)^{k-1} e^{2 i m 3^n} \Phi
   \left(-e^{2 i 3^n m},-k,\frac{3^{-n} a}{2}+1\right)
\end{multline}
\end{theorem}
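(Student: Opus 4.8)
The plan is to repeat, with the details filled in, the one–sentence argument that established Theorems~(\ref{eq:theorem_ss}) and~(\ref{eq:theorem_cc}). All the ingredients are already assembled in Section~6: the three ``left‑hand side'' contour integral representations of the sine--secant series, equations~(\ref{lhfc_ss}), (\ref{lhsc_ss}) and~(\ref{lhtc_ss}), and the two ``right‑hand side'' representations, equation~(\ref{rhc_ss}) together with the unlabelled display immediately following it. First I would note that the sum of the contour integrals on the right of (\ref{lhfc_ss}), (\ref{lhsc_ss}), (\ref{lhtc_ss}) coincides with the sum of the contour integrals on the right of (\ref{rhc_ss}) and its successor; since the integrands then agree, the Hurwitz--Lerch closed forms on the left of the two bundles may be equated, and clearing $\Gamma(k+1)$ together with a simplification yields the stated identity. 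The derivation is valid where the series converge, $Re(m+w)>0$ and $Im(m+w)>0$, and extends to all $k,a,m\in\mathbb{C}$ by analytic continuation of $\Phi$.

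The only genuine point to verify --- and the step I expect to be the main obstacle --- is that those two bundles of contour integrals really have the same integrand, i.e.\ the trigonometric identity (with $\phi=m+w$)
\[
\sum_{p=0}^{n-1}3^{-p}\sin\!\bigl(3^{p}\phi\bigr)\Bigl(2\cos\!\bigl(2\cdot 3^{p}\phi\bigr)+3^{p+1}-2\Bigr)\sec\!\bigl(3^{p+1}\phi\bigr)=\tfrac12\,3^{1-n}\bigl(3^{n}-1\bigr)\tan\!\bigl(3^{n}\phi\bigr).
\]
I would prove this by first using $2\sin\theta\cos 2\theta=\sin 3\theta-\sin\theta$ with $\theta=3^{p}\phi$, which turns the $p$-th summand into $3^{-p}\tan(3^{p+1}\phi)+3\bigl(1-3^{-p}\bigr)\sin(3^{p}\phi)\sec(3^{p+1}\phi)$; then applying the elementary telescoping $\sin\theta\sec 3\theta=\tfrac12\bigl(\tan 3\theta-\tan\theta\bigr)$, which gives $\sum_{p=0}^{n-1}\sin(3^{p}\phi)\sec(3^{p+1}\phi)=\tfrac12\bigl(\tan(3^{n}\phi)-\tan\phi\bigr)$; and finally the Levett telescoping behind equation~(\ref{eq:contour}), which in the equivalent form $\sum_{p=0}^{n-1}3^{-p}\bigl(3\sin(3^{p}\phi)\sec(3^{p+1}\phi)-\tan(3^{p+1}\phi)\bigr)=\tfrac32\bigl(3^{-n}\tan(3^{n}\phi)-\tan\phi\bigr)$ disposes of the remaining weighted pieces. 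Adding the three contributions collapses the left side to the right side above, which is precisely the contour‑integrand match required; this is the computation behind the third contour integral form quoted in the Introduction.

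With the contour integrals identified, I would equate the Hurwitz--Lerch sides and simplify. The $\log^{k}(a)$ terms supplied by (\ref{lhtc_ss}) sum, via $\sum_{p=0}^{n-1}3^{-p}=\tfrac12\,3^{1-n}(3^{n}-1)$, to exactly the $\log^{k}(a)$ terms jointly supplied by (\ref{rhc_ss}) and its successor, so they cancel outright. Removing the common factor $i\,2^{k}/\Gamma(k+1)$ from every surviving term, the remainder is bookkeeping: the third Lerch arguments $\tfrac12\,3^{-p-1}(-i\log a\mp 3^{p}+3^{p+1})$ reduce to $\tfrac16(2-i\,3^{-p}\log a)$ and $\tfrac16(4-i\,3^{-p}\log a)$, while $1-\tfrac12 i\,3^{-p-1}\log a$ reduces to $\tfrac16(6-i\,3^{-p}\log a)$; the exponentials collapse via $e^{im(3^{p+1}-3^{p})}=e^{2im3^{p}}$ and $e^{2im3^{p+1}}=e^{6im3^{p}}$; the weight $3-3^{1-p}$ factors as $3\cdot 3^{-p}(3^{p}-1)$, producing the coefficients $\pm 3(3^{p}-1)$; and on the right $3-3^{1-n}=3^{1-n}(3^{n}-1)$ together with $3^{1-n}(i3^{n})^{k}=3i\,(i3^{n})^{k-1}$ collapses the two right‑hand contributions into the single term $-3i(3^{n}-1)(i3^{n})^{k-1}e^{2im3^{n}}\Phi(-e^{2i3^{n}m},-k,1-\tfrac12 i\,3^{-n}\log a)$. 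The theorem as displayed, in which ``$a$'' appears where the derivation produces ``$-i\log(a)$'', is then the outcome of the harmless reparametrisation $a\mapsto e^{ia}$ (so that $-i\,3^{-p}\log a\mapsto 3^{-p}a$); the identity holds verbatim with $a$ replaced by $-i\log(a)$ throughout if one prefers to keep $\log(a)$.
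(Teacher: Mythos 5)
Your proposal is correct and follows essentially the same route as the paper: summing the contour representations (\ref{lhfc_ss}), (\ref{lhsc_ss}), (\ref{lhtc_ss}), matching them against (\ref{rhc_ss}) and the display after it, equating the Hurwitz--Lerch sides, and substituting $a\mapsto e^{ia}$. The only difference is that you supply the telescoping verification of the underlying trigonometric identity and the bookkeeping that the paper leaves implicit, and these details check out.
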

\begin{proof}
Observe that the addition of the right-hand side of equations (\ref{lhfc_ss}), (\ref{lhsc_ss}) and (\ref{lhtc_ss}) are equal to the right-hand sides of equation (\ref{rhc_ss})  so we may equate the left-hand sides replace $a$ by $e^{ai}$ and simplify  relative to equation (\ref{eq:contour}) and the Gamma function to yield the stated result.
\end{proof}
\begin{example}
The degenerate case.
\begin{equation}
\sum_{p=0}^{n-1}3^{-p} \sin ^3\left(m 3^p\right) \sec \left(m 3^{p+1}\right)=\frac{3}{8} \left(3^{-n} \tan \left(m 3^n\right)-\tan (m)\right)
\end{equation}
\end{example}
\begin{proof}
Use equation (\ref{eq:theorem_ss}) and set $k=0$ and simplify using entry (2) in the Table below equation (64:12:7) on page 692 in \cite{atlas} to yield the stated result.
\end{proof}
\begin{example}
The degenerate case.
\begin{equation}
\sum_{p=0}^{n-1}\cos \left(2 m 3^p\right) \csc \left(m 3^{p+1}\right)=\frac{1}{2} \left(\csc (m)-\csc \left(m 3^n\right)\right)
\end{equation}
\end{example}
\begin{proof}
Use equation (\ref{eq:theorem_cc}) and set $k=0$ and simplify using entry (2) in the Table below equation (64:12:7) on page 692 in \cite{atlas} to yield the stated result.
\end{proof}
\begin{example}
The degenerate case.
\begin{multline}
\sum_{p=0}^{n-1}\frac{3^{-p} \left(-2 i \sin \left(2 m 3^p\right)-2 \cos \left(2 m 3^p\right)-i \left(3^{p+1}-4\right) \tan \left(m 3^p\right)+1\right)}{2 \cos \left(2 m 3^p\right)-1}\\
=\frac{3}{2}
   \left(3^{-n}-1\right) \left(1+i \tan \left(m 3^n\right)\right)
\end{multline}
\end{example}
\begin{proof}
Use equation (\ref{eq:theorem_ss1}) and set $k=0$ and simplify using entry (2) in the Table below equation (64:12:7) on page 692 in \cite{atlas} to yield the stated result.
\end{proof}
\section{Table of results: Part I}
In this section we evaluate Theorem (\ref{eq:theorem_ss}) for various values of the parameters involved to derive related closed form formulae in terms of special functions and fundamental constants.
\begin{example}
Finite product of quotient gamma functions. 
\begin{multline}\label{eq:gamma_ss}
\prod_{p=0}^{n-1}\left(\frac{\Gamma \left(3^{-p-1} a+\frac{1}{2}\right)}{\Gamma \left(3^{-p-1} a+1\right)}\right)^{2\times 3^{-p-1}}\\
 \left(\frac{9^{p+1} \Gamma \left(3^{-p-1} a+\frac{1}{6}\right) \Gamma \left(3^{-p-1}
   a+\frac{5}{6}\right)}{\left(3^p-a\right) \left(2\times 3^p-a\right) \Gamma \left(\frac{1}{3} \left(3^{-p} a-2\right)\right) \Gamma \left(\frac{1}{3} \left(3^{-p}
   a-1\right)\right)}\right)^{3^{-p}}\\
=\frac{3^{\frac{3}{4} \left(1-3^{-n}\right)} \Gamma \left(a+\frac{1}{2}\right) \left(\frac{\Gamma \left(3^{-n} a+1\right)}{\Gamma \left(3^{-n}
   a+\frac{1}{2}\right)}\right)^{3^{-n}}}{\Gamma (a+1)}
\end{multline}
\end{example}
\begin{proof}
Use equation (\ref{eq:theorem_ss}) and set $m=0$ and simplify in terms of the Hurwitz zeta function using entry (4) in the Table below equation (64:12:7) on page 692 in \cite{atlas}. Next take the first partial derivative with respect to $k$ and set $k=0$ and simplify in terms of the log-gamma function using equation (64:10:2) in \cite{atlas}. Finally take the exponential function of both sides and simplify both sides to yield the stated result.
\end{proof}
\begin{example}
Infinite product of quotient gamma functions. 
\begin{multline}
\prod_{p=0}^{\infty}\left(\frac{\Gamma \left(3^{-p-1} a+\frac{1}{2}\right)}{\Gamma \left(3^{-p-1} a+1\right)}\right)^{2\times 3^{-p-1}}\\
 \left(\frac{9^{p+1} \Gamma \left(3^{-p-1} a+\frac{1}{6}\right) \Gamma \left(3^{-p-1}
   a+\frac{5}{6}\right)}{\left(3^p-a\right) \left(2\times 3^p-a\right) \Gamma \left(\frac{1}{3} \left(3^{-p} a-2\right)\right) \Gamma \left(\frac{1}{3} \left(3^{-p}
   a-1\right)\right)}\right)^{3^{-p}}\\
=\frac{3^{3/4} \Gamma \left(a+\frac{1}{2}\right)}{\Gamma (a+1)}
\end{multline}
\end{example}
\begin{proof}
Use equation (\ref{eq:gamma_ss}) and take the limit as $n\to \infty$ of the right-hand side and simplify to yield the stated result.
\end{proof}
\begin{example}
Finite product of root of quotient cosine functions.
\begin{multline}\label{eq:cosine}
\prod_{p=0}^{n-1} \left(\frac{\left(\frac{\cos \left(3^p m\right)}{\cos \left(3^p r\right)}\right)^{16} \left(1-2 \cos \left(2\times 3^p r\right)\right)^2}{\left(1-2 \cos \left(2\times 3^p m\right)\right)^2}\right)^{3^{-2
   p}}=\left(\frac{\cos (m)}{\cos (r)}\right)^{18} \left(\frac{\cos \left(3^n r\right)}{\cos \left(3^n m\right)}\right)^{2\times 3^{2-2 n}}
\end{multline}
\end{example}
\begin{proof}
Use equation (\ref{eq:theorem_ss}) and form a second equation by replacing $m\to r$. Next take the differnce of the two equations and simplify. Then set $k=-1,a=1$ and simplify in terms of the logarithm function using entry (5) in the Table below equation (64:12:7) on page 692 in \cite{atlas}. Next take the exponential function of both sides and simplify to yield the stated result. 
\end{proof}
\begin{example}
Infinite product of root of quotient cosine functions. 
\begin{equation}\label{eq:qcf}
\prod_{p=0}^{\infty}\left(\frac{\left(\frac{\cos \left(3^p m\right)}{\cos \left(3^p
   r\right)}\right)^{16} \left(1-2 \cos \left(2\times 3^p
   r\right)\right)^2}{\left(1-2 \cos \left(2\times 3^p
   m\right)\right)^2}\right)^{\frac{3^{-2 p}}{18}}=\frac{\cos (m)}{\cos
   (r)}
\end{equation}
\end{example}
\begin{proof}
Use equation (\ref{eq:cosine}) and analyze the right-hand side as $n\to \infty$ using Figure 2.
\end{proof}
\begin{figure}[H]
\includegraphics[scale=0.8]{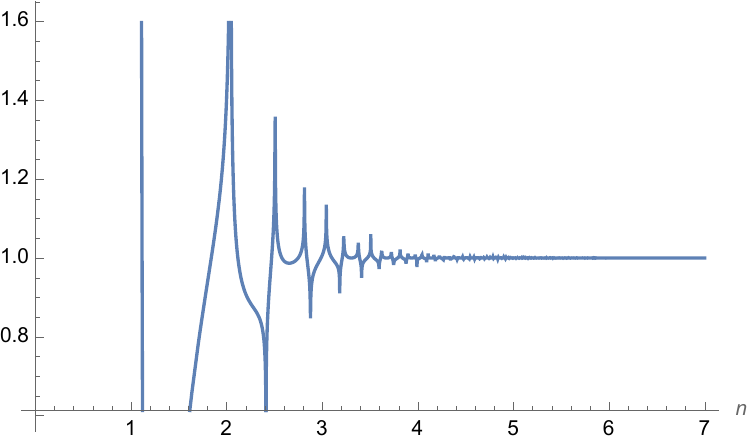}
\caption{Plot of  $f(m,r,n)=\left(\sec \left(m 3^n\right) \cos \left(3^n r\right)\right)^{2\times 3^{2-2 n}}$, $m,r\in\mathbb{R}$.}
   \label{fig:fig2}
\end{figure}
\vspace{-6pt}
\section{Quotient cosine functions with reciprocal angles: the Secant-sine series}
\begin{figure}[H]
\includegraphics[scale=0.8]{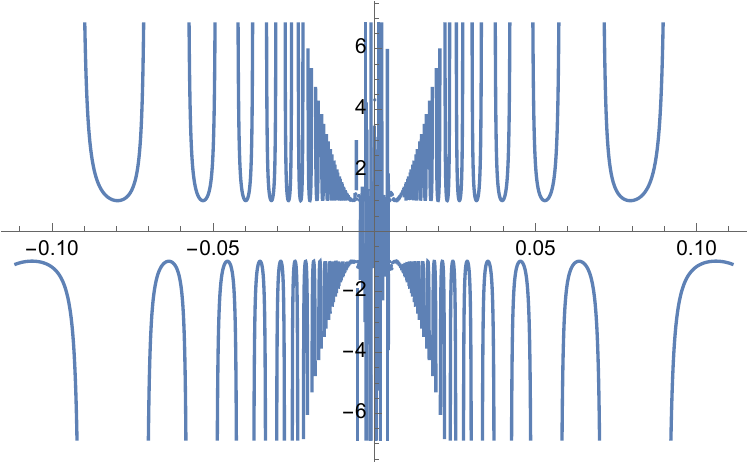}
\caption{Plot of  $f(m)=\cos (m) \sec \left(\frac{1}{m}\right)$, $m\in\mathbb{R}$.}
   \label{fig:fig2}
\end{figure}
\vspace{-6pt}
\begin{figure}[H]
\includegraphics[scale=0.8]{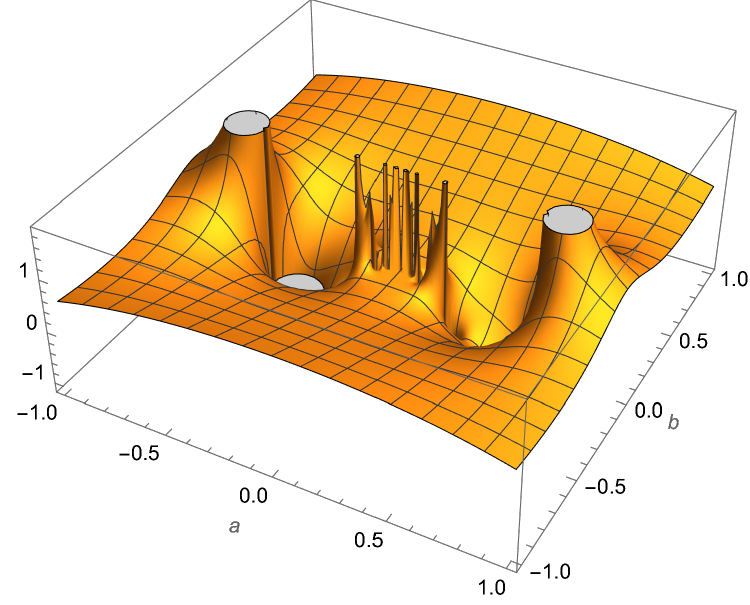}
\caption{Plot of  $f(m)=Re\left(\cos (m) \sec \left(\frac{1}{m}\right)\right)$, $m\in\mathbb{C}$.}
   \label{fig:fig2}
\end{figure}
\vspace{-6pt}
\begin{figure}[H]
\includegraphics[scale=0.8]{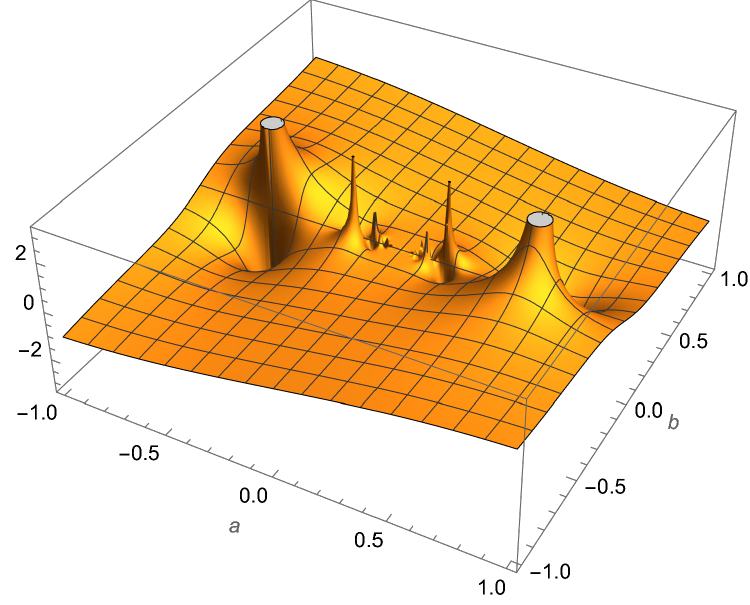}
\caption{Plot of  $f(m)=Im\left(\cos (m) \sec \left(\frac{1}{m}\right)\right)$, $m\in\mathbb{C}$.}
   \label{fig:fig2}
\end{figure}
\vspace{-6pt}
\begin{figure}[H]
\includegraphics[scale=0.8]{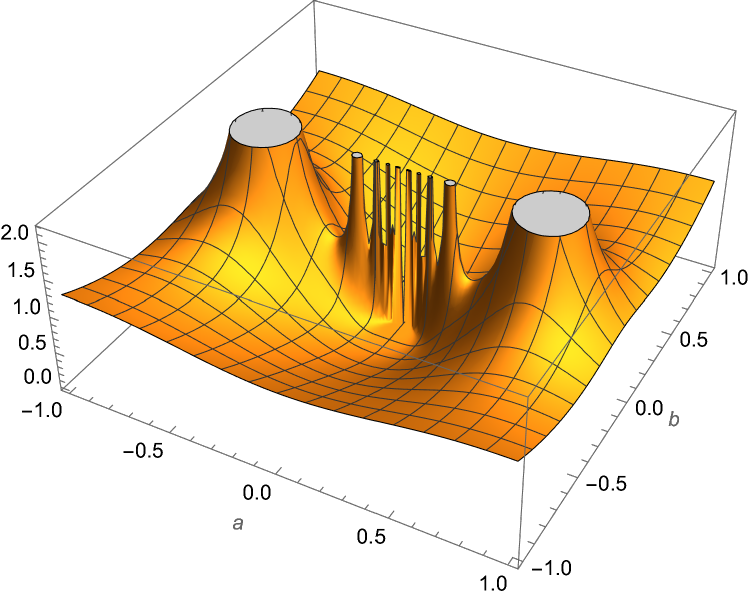}
\caption{Plot of  $f(m)=Abs\left(\cos (m) \sec \left(\frac{1}{m}\right)\right)$, $m\in\mathbb{C}$.}
   \label{fig:fig2}
\end{figure}
\vspace{-6pt}
\subsection{Elliptic functions}
A traditional area of mathematics called elliptic functions \cite{lawden} is used often in physics and engineering applications, either directly or as a building block for more complex ones. However, it cannot be denied that they are not as well recognised as they ought to be. It is typically feasible to discover an even simpler (and not always sufficient) skeleton of the theory in trigonometry in many instances where the elliptic functions might be usefully applied, or at least examined as a potentially effective first approximation. Here we present a simple infinite product in terms of the ratio involving elliptic functions. In this example we look at an extension of the ratio of Elliptic functions in terms of the infinite product involving the ratio of the cosine function using equation (\ref{eq:qcf}) and equation (29') in \cite{zaroodny}, where $-1< Re(k) < Re(a) < Re(b)<1$.
\begin{example}
Infinite product of sine and cosine functions in terms of Elliptic functions. 
\begin{multline}
\prod_{p=0}^{\infty}\left(\frac{\cos ^{16}\left(3^p k \sin (a)\right) \left(1-2 \cos \left(2\times 3^p k \sin (b)\right)\right)^2 \sec ^{16}\left(3^p k \sin (b)\right)}{\left(1-2 \cos
   \left(2\times 3^p k \sin (a)\right)\right)^2}\right)^{\frac{1}{2} 3^{-2-2 p}}\\
=\frac{\cos (k \sin (a))}{\cos (k \sin (b))}=\frac{\sqrt{1-(k \sin (a))^2}}{\sqrt{1-(k \sin
   (b))^2}}
\end{multline}
\end{example}
\begin{example}
Functional equation in terms of the Hurwitz-Lerch zeta function. 
\begin{multline}
\Phi (z,s,a)
=3^{-2 s-1} \left(3^s \left(3 \Phi \left(z^3,s,\frac{a}{3}\right)+z \left(3 \Phi \left(z^3,s,\frac{a+1}{3}\right)+2 z \Phi \left(z^3,s,\frac{a+2}{3}\right)\right)\right)\right. \\ \left.+z^2 \left(\Phi
   \left(z^9,s,\frac{a+2}{9}\right)+z^6 \Phi \left(z^9,s,\frac{a+8}{9}\right)+z^3 \Phi \left(z^9,s,\frac{a+5}{9}\right)\right)\right)
\end{multline}
\end{example}
\begin{proof}
Use equation (\ref{eq:theorem_ss}) and set $n=2,m=\frac{\log(-z)}{2i},a=e^{ai},k=-s$ and simplify.
\end{proof}
\section{Special cases of the Hurwitz-Lerch zeta function}
In this section we will evaluate equation (\ref{eq:theorem_ss}) and derive formulae in terms of fundamental constants; namely Catalan's constant $C$ given in equations (20) and (21) in \cite{guillera}, Glaisher's constant $A$, given in equation (18) in \cite{guillera}, Apery's constant $\zeta(3)$, given in equation (19).
\begin{example}
Finite series in terms of Catalan's constant $C$. 
\begin{multline}
\sum_{p=0}^{n-1}27^{-p} \left(3 \psi ^{(1)}\left(\frac{1}{12} \left(2+3^{-p}\right)\right)-3 \psi ^{(1)}\left(\frac{1}{12} \left(4+3^{-p}\right)\right)+2 \psi ^{(1)}\left(\frac{1}{12}
   \left(6+3^{-p}\right)\right)\right. \\ \left.
-3 \psi ^{(1)}\left(\frac{1}{12} \left(8+3^{-p}\right)\right)+3 \psi ^{(1)}\left(\frac{1}{12} \left(10+3^{-p}\right)\right)-2 \left(8\times 9^{p+1}+\psi
   ^{(1)}\left(\frac{1}{12} \left(12+3^{-p}\right)\right)\right)\right)\\
=27^{1-n} \left(8\times 9^n \left(-2 C 3^n+3^n+1\right)+\psi ^{(1)}\left(1+\frac{3^{-n}}{4}\right)-\psi
   ^{(1)}\left(\frac{1}{4} \left(2+3^{-n}\right)\right)\right)
\end{multline}
\end{example}
\begin{proof}
Use equation (\ref{eq:theorem_ss}) and set $m=0,k=-2,a=e^{i}$ and simplify.
\end{proof}
\begin{example}
Difference of Hurwitz-Lerch zeta functions in terms of Glaisher's constant $A$.
\begin{equation}
\Phi'\left(-1,-1,\frac{1}{3}\right)-\Phi'\left(-1,-1,\frac{2}{3}\right)=\log \left(\frac{2^{2/9} \sqrt[12]{3}
   \sqrt[6]{e}}{A^2}\right)
\end{equation}
\end{example}
\begin{proof}
Use equation (\ref{eq:theorem_ss}) take the first partial derivative with respect to $k$ and set $m=0,k=a=1$ and simplify.
\end{proof}
\begin{example}
Difference of Hurwitz-Lerch zeta functions in terms of Apery's constant $\zeta](3)$.
\begin{equation}
\Phi'\left(-1,-2,\frac{2}{3}\right)-\Phi'\left(-1,-2,\frac{1}{3}\right)=\frac{14 \zeta (3)}{9 \pi ^2}
\end{equation}
\end{example}
\begin{proof}
Use equation (\ref{eq:theorem_ss}) take the first partial derivative with respect to $k$ and set $m=0,k=2,a=1$ and simplify.
\end{proof}
\begin{example}
Finite series in terms of Catalan's constant $C$.
\begin{multline}
\sum_{p=0}^{n-1}\left(18 \Phi'\left(-1,-1,\frac{1}{6} \left(2-3^{-p}\right)\right)-18 \Phi'\left(-1,-1,\frac{1}{6} \left(4-3^{-p}\right)\right)\right. \\ \left.
+12
   \Phi'\left(-1,-1,\frac{1}{6} \left(6-3^{-p}\right)\right)+3^{-p} \log \left(i 3^{p+1}\right)\right)\\
=\frac{-6 \pi 
   \Phi'\left(-1,-1,1-\frac{3^{-n}}{2}\right)+6 C+\frac{3}{2} \left(\pi -\pi  3^{-n}\right) \log \left(i 3^n\right)}{\pi }
\end{multline}
\end{example}
\begin{proof}
Use equation (\ref{eq:theorem_ss}) take the first partial derivative with respect to $k$ and set $m=0,k=1,a=e^{-i}$ and simplify.
\end{proof}
\begin{example}
Finite product of quotient gamma functions. 
\begin{multline}\label{eq:q_gamma}
\prod_{p=0}^{n-1}\left(\frac{3^{a 3^{-p}+2 p} \Gamma \left(3^{-p-1} a-1\right)}{\left(a^2-a
   3^{p+1}+2\times 9^p\right) \Gamma \left(3^{-p-1} a+1\right)^{2/3} \Gamma
   \left(3^{-p} a-3\right)}\right)^{3^{-p}}\\
=\frac{3^{\frac{1}{8} 9^{-n} \left(9 a
   \left(9^n-1\right)+8\ 3^n \left(-n+3^{n+1}-3\right)\right)} a^{3^{-n}} \Gamma
   \left(3^{-n} a\right)^{3^{-n}}}{\Gamma (a+1)}
\end{multline}
\end{example}
\begin{proof}
Use equation (\ref{eq:theorem_ss}) and set $m=\pi/2$ and simplify in terms of the Hurwitz zeta function using entry (4) in the Table below equation (64:12:7) on page 692 in \cite{atlas}. Next take the first partial derivative with respect to $k$ and set $k=0$ and simplify in terms of the log-gamma function using equation (64:10:2) in \cite{atlas}. Finally take the exponential function of both sides and simplify both sides to yield the stated result.
\end{proof}
\begin{example}
Infinite product of quotient gamma functions. 
\begin{equation}
\prod_{p=0}^{\infty}\left(\frac{3^{a 3^{-p}+2 p} \Gamma \left(3^{-p-1} a-1\right)}{\left(a^2-a
   3^{p+1}+2\times 9^p\right) \Gamma \left(3^{-p-1} a+1\right)^{2/3} \Gamma
   \left(3^{-p} a-3\right)}\right)^{3^{-p}}=\frac{3^{\frac{9 a}{8}+3}}{\Gamma
   (a+1)}
\end{equation}
\end{example}
\begin{proof}
Use equation (\ref{eq:q_gamma}) and take the limit of the right-hand side as $n\to \infty$ and simplify.
\end{proof}
\begin{example}
Finite product involving the cosine function. 
\begin{equation}\label{eq:cosine_jq}
\prod_{p=0}^{n-1}\left(\frac{\left(e^{-2 m 3^{p+1}}+1\right)^{2/3} \cosh ^2\left(m 3^p\right)}{2 \cosh \left(2 m 3^p\right)-1}\right)^{3^{-2
   p}}
   =\frac{2^{\frac{3}{4} \left(1+3^{1-2 n}\right)} \cosh ^3(m)}{e^{3 m} \left(1+e^{-2\times 3^n
   m}\right)^{3^{1-2 n}}}
\end{equation}
\end{example}
\begin{proof}
Use equation (\ref{eq:theorem_ss}) and set $a=1$ and simplify in terms of the Polylogarithm function using equation (64:12:2) in \cite{atlas}. Next simplify the Polylogarithm function in terms of the Hurwitz zeta function using equation (6) in \cite{jonq}. Next take the limit of both sides as $k\to -1$ and simplify in terms of the log-gamma function using equation (64:10:2) in \cite{atlas}. Next take the exponential function of both sides and simplify.
\end{proof}
\begin{example}
Infinite product involving the cosine function. 
\begin{equation}
\prod_{p=0}^{\infty}\left(\frac{\left(e^{-2 m 3^{p+1}}+1\right)^{2/3} \cosh ^2\left(m 3^p\right)}{2 \cosh \left(2 m 3^p\right)-1}\right)^{3^{-2
   p-1}}=\frac{e^{-2 m}+1}{2^{3/4}}
\end{equation}
\end{example}
\begin{proof}
Use equation (\ref{eq:cosine_jq}) and take the limit as $n\to \infty$ and simplify.
\end{proof}
%
%

\section{Table of results: Part II}
In this section we evaluate Theorem (\ref{eq:theorem_cc}) for various values of the parameters involved to derive related closed form formulae in terms of special functions and fundamental constants.
\begin{example}
Finite product of gamma functions. 
\begin{equation}\label{eq:gamma_cc}
\prod_{p=0}^{n-1}\frac{\Gamma \left(\frac{1}{6} \left(3^{-p} a+1\right)\right) \Gamma \left(\frac{1}{6} \left(3^{-p}
   a+5\right)\right)}{2 \pi }=\frac{3^{-\frac{1}{4} a 3^{1-n} \left(3^n-1\right)} \Gamma
   \left(\frac{a+1}{2}\right)}{\Gamma \left(\frac{1}{2} \left(3^{-n} a+1\right)\right)}
\end{equation}
\end{example}
\begin{proof}
Use equation (\ref{eq:theorem_cc}) and set $m=0$ and simplify in terms of the Hurwitz zeta function using entry (4) in the Table below equation (64:12:7) on page 692 in \cite{atlas}. Next take the first partial derivative with respect to $k$ and set $k=0$ and simplify in terms of the log-gamma function using equation (64:10:2) in \cite{atlas}. Finally take the exponential function of both sides and simplify both sides to yield the stated result.
\end{proof}
\begin{example}
Infinite product of gamma functions. 
\begin{equation}
\prod_{p=0}^{\infty}\frac{\Gamma \left(\frac{1}{6} \left(3^{-p} a+1\right)\right) \Gamma \left(\frac{1}{6} \left(3^{-p}
   a+5\right)\right)}{2 \pi }=\frac{3^{-3 a/4} \Gamma \left(\frac{a+1}{2}\right)}{\sqrt{\pi }}
\end{equation}
\end{example}
\begin{proof}
Use equation (\ref{eq:gamma_cc}) and take the limit as $n\to \infty$ and simplify.
\end{proof}
\begin{example}
Finite product of quotient hyperbolic tangent functions. 
\begin{multline}\label{eq:q_hyper_tan}
\prod_{p=0}^{n-1}\left(\frac{\left(1+2 \cosh \left(2\times 3^p m\right)\right) \left(-1+2 \cosh \left(2\times 3^p r\right)\right)}{\left(-1+2 \cosh \left(2\times 3^p m\right)\right) \left(1+2 \cosh \left(2\times 3^p
   r\right)\right)}\right)^{3^{-p}} \left(\frac{\tanh \left(3^p r\right)}{\tanh \left(3^p m\right)}\right)^{2\times 3^{-p}}\\
=\left(\frac{\tanh \left(3^n m\right)}{\tanh \left(3^n
   r\right)}\right)^{3^{1-n}} \left(\frac{\tanh (r)}{\tanh (m)}\right)^3
\end{multline}
\end{example}
\begin{proof}
Use equation (\ref{eq:theorem_cc}) and form a second equation by replacing $m\to r$. Next take the differnce of the two equations and simplify. Then set $k=-1,a=1$ and simplify in terms of the logarithm function using entry (5) in the Table below equation (64:12:7) on page 692 in \cite{atlas}. Next take the exponential function of both sides and simplify to yield the stated result. 
\end{proof}
\begin{example}
Infinite product of quotient hyperbolic tangent functions. 
\begin{multline}
\prod_{p=0}^{\infty}\left(\frac{\left(1+2 \cosh \left(2\times 3^p m\right)\right) \left(-1+2 \cosh \left(2\times 3^p r\right)\right)}{\left(-1+2 \cosh \left(2\times 3^p m\right)\right) \left(1+2 \cosh \left(2\times 3^p
   r\right)\right)}\right)^{3^{-p}} \left(\frac{\tanh \left(3^p r\right)}{\tanh \left(3^p m\right)}\right)^{2\times 3^{-p}}\\
=\left(\frac{\tanh (r)}{\tanh (m)}\right)^3
\end{multline}
\end{example}
\begin{proof}
Use equation (\ref{eq:q_hyper_tan}) and take the limit as $n\to \infty$ of the right-hand side and simplify.
\end{proof}
\section{Quotient cosine functions with reciprocal angles: the cosecant-cosine series}
In this section we look at plots involving the ratio of the hyperbolic tangent function and reciprocal angles.
\begin{figure}[H]
\includegraphics[scale=0.8]{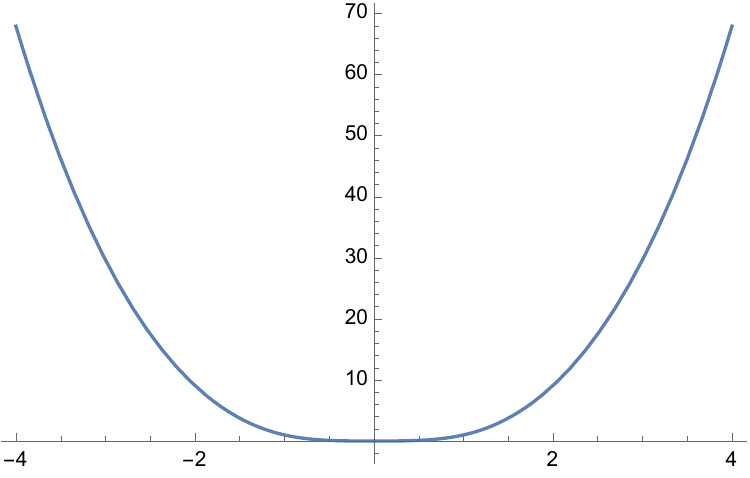}
\caption{Plot of  $f(m)=\tanh ^3(r) \coth ^3\left(\frac{1}{r}\right)$, $m\in\mathbb{R}$.}
   \label{fig:fig2}
\end{figure}
\vspace{-6pt}
\begin{figure}[H]
\includegraphics[scale=0.8]{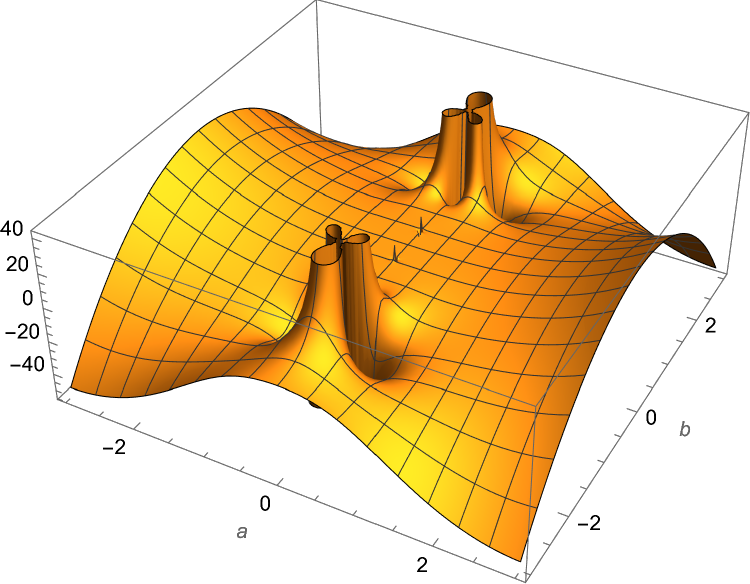}
\caption{Plot of  $f(r)=Re\left(\tanh ^3(r) \coth ^3\left(\frac{1}{r}\right)\right)$, $r\in\mathbb{C}$.}
   \label{fig:fig2}
\end{figure}
\vspace{-6pt}
\begin{figure}[H]
\includegraphics[scale=0.8]{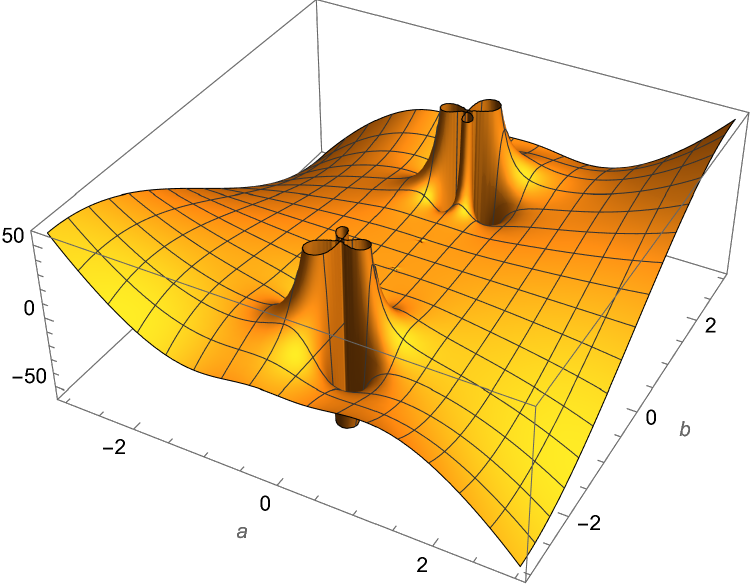}
\caption{Plot of  $f(r)=Im\left(\tanh ^3(r) \coth ^3\left(\frac{1}{r}\right)\right)$, $r\in\mathbb{C}$.}
   \label{fig:fig2}
\end{figure}
\vspace{-6pt}
\begin{figure}[H]
\includegraphics[scale=0.8]{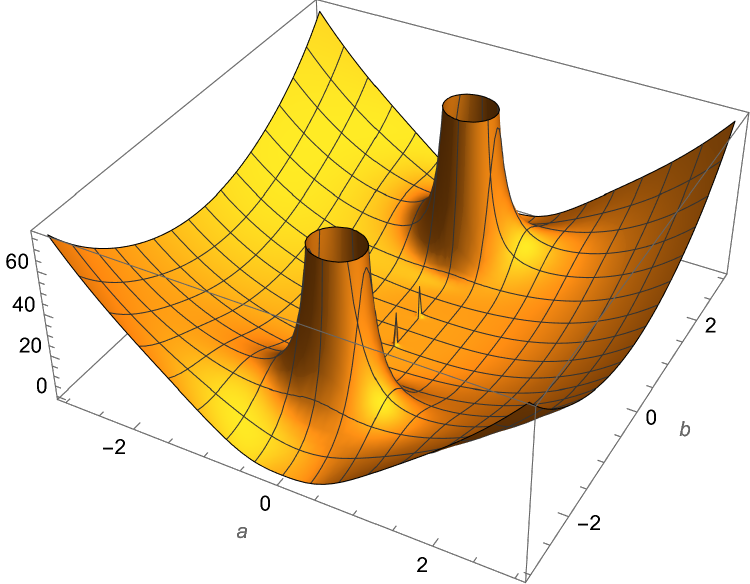}
\caption{Plot of  $f(r)=Abs\left(\tanh ^3(r) \coth ^3\left(\frac{1}{r}\right)\right)$, $r\in\mathbb{C}$.}
   \label{fig:fig2}
\end{figure}
\vspace{-6pt}
\begin{example}
Functional equation for the Hurwitz-Lerch zeta function.
\begin{multline}
\Phi (z,s,a)=3^{-s} \left(\Phi \left(z^3,s,\frac{a}{3}\right)+z \left(\Phi \left(z^3,s,\frac{a+1}{3}\right)+z \Phi \left(z^3,s,\frac{a+2}{3}\right)\right)\right)
\end{multline}
\end{example}
\begin{proof}
Use equation (\ref{eq:theorem_cc}) and set $n=1,m=\frac{\log(z)}{2i},a=e^{ai},k=-s$ and simplify.
\end{proof}
\begin{example}
Finite series in terms of Catalan's constant C.
\begin{multline}
\sum_{p=0}^{n-1}9^{-p} \left(\psi ^{(1)}\left(\frac{1}{12} \left(2+3^{-p}\right)\right)+\psi ^{(1)}\left(\frac{1}{12} \left(10+3^{-p}\right)\right)\right)\\
=9 \left(-8 C-9^{-n} \psi
   ^{(1)}\left(\frac{1}{4} \left(2+3^{-n}\right)\right)+\pi ^2\right)
\end{multline}
\end{example}
\begin{proof}
Use equation (\ref{eq:theorem_cc}) and take the first partial derivative with respect to $k$ and set $k=-2,m=0,a=e^{i/2}$ and simplify in terms of the Polygamma function using equation (64:12:2) in \cite{atlas}.
\end{proof}
\begin{example}
Finite series in terms of Catalan's constant C. 
\begin{multline}
\sum_{p=0}^{n-1}9^{-p} \left(\Phi'\left(1,2,\frac{1}{12} \left(2-3^{-p}\right)\right)+\Phi'\left(1,2,\frac{1}{6}
   \left(5-\frac{3^{-p}}{2}\right)\right)\right. \\ \left.
-\log \left(i 3^{p+1}\right) \left(\psi ^{(1)}\left(\frac{1}{12} \left(2-3^{-p}\right)\right)+\psi ^{(1)}\left(\frac{1}{12}
   \left(10-3^{-p}\right)\right)\right)\right)\\
=9 \left(-9^{-n}
   \Phi'\left(1,2,\frac{1}{2}-\frac{3^{-n}}{4}\right)+\Phi'\left(1,2,\frac{1}{4}\right)-\frac{1}{2} i \pi  \left(8 C+\pi
   ^2\right)\right. \\ \left.
+9^{-n} \log \left(i 3^n\right) \psi ^{(1)}\left(\frac{1}{2}-\frac{3^{-n}}{4}\right)\right)
\end{multline}
\end{example}
\begin{proof}
Use equation (\ref{eq:theorem_cc}) and take the first partial derivative with respect to $k$ and set $k=-2,m=0,a=e^{-i/2}$ and simplify in terms of the Polygamma function using equation (64:12:2) in \cite{atlas}.

\end{proof}
\begin{example}
Finite product of quotient gamma functions.
\begin{multline}\label{eq:q_gamma_cc1}
\prod_{p=0}^{n-1}\left(\frac{\Gamma \left(\frac{1}{12} \left(3^{-p} a+7\right)\right) \Gamma \left(\frac{1}{12} \left(3^{-p}
   a+11\right)\right)}{\Gamma \left(\frac{1}{4} 3^{-p-1} \left(a+3^p\right)\right) \Gamma \left(\frac{1}{12}
   \left(3^{-p} a+5\right)\right)}\right)^{(-1)^p}\\
=\frac{3^{\frac{1}{4} \left((-1)^n-1\right)} \Gamma
   \left(\frac{a+3}{4}\right) \left(\frac{\Gamma \left(\frac{1}{4} \left(3^{-n} a+1\right)\right)}{\Gamma
   \left(\frac{1}{4} \left(3^{-n} a+3\right)\right)}\right)^{(-1)^n}}{\Gamma \left(\frac{a+1}{4}\right)}
\end{multline}
\end{example}
\begin{proof}
Use equation (\ref{eq:theorem_cc}) and set $m=\pi/2$ and simplify in terms of the Hurwitz zeta function using entry (4) in the Table below equation (64:12:7) on page 692 in \cite{atlas}. Next take the first partial derivative with respect to $k$ and set $k=0$ and simplify in terms of the log-gamma function using equation (64:10:2) in \cite{atlas}. Finally take the exponential function of both sides and simplify both sides to yield the stated result.
\end{proof}
\begin{example}
An upper-bound for an infinite product of the ratio of gamma functions.
\begin{equation}
\prod_{p=0}^{\infty}\left(\frac{\Gamma \left(\frac{1}{12} \left(3^{-p} a+7\right)\right) \Gamma \left(\frac{1}{12} \left(3^{-p}
   a+11\right)\right)}{\Gamma \left(\frac{1}{4} 3^{-p-1} \left(a+3^p\right)\right) \Gamma \left(\frac{1}{12} \left(3^{-p}
   a+5\right)\right)}\right)^{(-1)^p}<\frac{\Gamma \left(\frac{1}{4}\right) \Gamma \left(\frac{a+3}{4}\right)}{\Gamma
   \left(\frac{3}{4}\right) \Gamma \left(\frac{a+1}{4}\right)}
\end{equation}
\end{example}
\begin{proof}
Use equation (\ref{eq:q_gamma_cc1}) and apply the limit as $n\to \infty$ to the right-hand side. In this analysis we see that the three exponential terms which have bounds as $n\to \infty$ namely $\frac{\Gamma \left(\frac{a+3}{4}\right) 3^{\frac{1}{4} \left(-1+e^{2 i \times[0,\pi]}\right)} 
 }{\Gamma \left(\frac{a+1}{4}\right)}\left(\frac{\Gamma\left(\frac{1}{4}\right)}{\Gamma \left(\frac{3}{4}\right)}\right)^{e^{2 i \times[0,\pi]}}$. We can see the bound on the right-hand side using Figure 11. over finite values of $n$.
\end{proof}
\begin{figure}[H]
\includegraphics[scale=0.8]{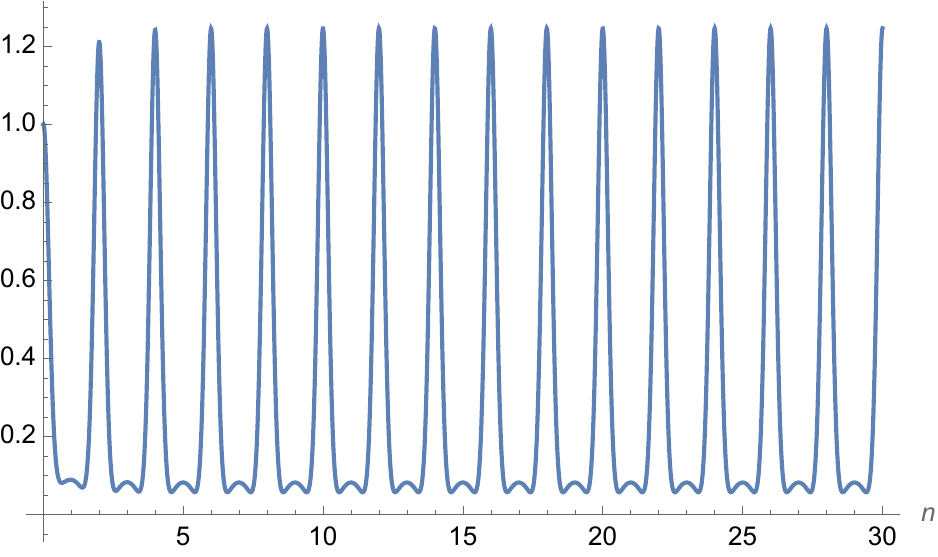}
\caption{Plot of  $\frac{3^{\frac{1}{4} \left((-1)^n-1\right)} \Gamma \left(\frac{a+3}{4}\right) \left(\frac{\Gamma \left(\frac{1}{4} \left(3^{-n}
   a+1\right)\right)}{\Gamma \left(\frac{1}{4} \left(3^{-n} a+3\right)\right)}\right)^{(-1)^n}}{\Gamma \left(\frac{a+1}{4}\right)}$, $a\in\mathbb{R}$.}
   \label{fig:fig2}
\end{figure}
%
%
\begin{example}
Finite product involving the cosine function. 
\begin{multline}\label{eq:coscos}
\prod_{p=0}^{n}\left(\frac{\left(\sqrt{3}+i \tanh \left(\frac{3^p x}{2}\right)\right) \left(\sqrt{3}-3 i \tanh \left(\frac{1}{2} 3^{p+1}
   x\right)\right)}{\left(\sqrt{3}-3 i \tanh \left(\frac{3^p x}{2}\right)\right) \left(\sqrt{3}+i \tanh \left(\frac{1}{2} 3^{p+1}x\right)\right)}\right)^{3^{-p-1}}\\
 \left(\tanh \left(\frac{1}{2} 3^{p+1} x\right) \coth \left(\frac{3^p x}{2}\right)\right)^{2\times3^{-p-1}} \\\left(\sinh \left(2\times 3^{-p-1}\left(\tanh ^{-1}\left(\frac{1}{2} \left(1-i \sqrt{3} \tanh \left(\frac{3^p
   x}{2}\right)\right)\right)\right.\right.\right. \\ \left.\left.\left.
-\tanh ^{-1}\left(\frac{1}{2} \left(1-i \sqrt{3} \tanh \left(\frac{1}{2} 3^{p+1}
   x\right)\right)\right)\right)\right)\right. \\ \left.
+\cosh \left(2\times 3^{-p-1} \left(\tanh ^{-1}\left(\frac{1}{2} \left(1-i \sqrt{3} \tanh\left(\frac{3^p x}{2}\right)\right)\right) \right.\right.\right. \\ \left.\left.\left.
-\tanh ^{-1}\left(\frac{1}{2} \left(1-i \sqrt{3} \tanh \left(\frac{1}{2} 3^{p+1}x\right)\right)\right)\right)\right)\right)\\
=\frac{\left(2 \cos \left(\frac{x}{3}\right)+1\right) \left(\tan
   \left(\frac{1}{2} 3^{n-1} x\right) \cot \left(\frac{3^n x}{2}\right)\right)^{3^{-n}}}{2 \cos
   \left(\frac{x}{3}\right)-1}
\end{multline}
\end{example}
\begin{proof}
Use equation (\ref{eq:theorem_cc}) and set $k=1,a=1,m=x$ and apply the method in section (8) in \cite{reyn_ejpam}.
\end{proof}
\begin{example}
Infinite product involving the cosine function. 
\begin{multline}
\prod_{p=0}^{\infty}\left(\frac{\left(\sqrt{3}+i \tanh \left(\frac{3^p x}{2}\right)\right) \left(\sqrt{3}-3 i \tanh \left(\frac{1}{2} 3^{p+1}
   x\right)\right)}{\left(\sqrt{3}-3 i \tanh \left(\frac{3^p x}{2}\right)\right) \left(\sqrt{3}+i \tanh \left(\frac{1}{2} 3^{p+1}x\right)\right)}\right)^{3^{-p-1}}\\
 \left(\tanh \left(\frac{1}{2} 3^{p+1} x\right) \coth \left(\frac{3^p x}{2}\right)\right)^{2\times3^{-p-1}} \\\left(\sinh \left(2\times 3^{-p-1}\left(\tanh ^{-1}\left(\frac{1}{2} \left(1-i \sqrt{3} \tanh \left(\frac{3^p
   x}{2}\right)\right)\right)\right.\right.\right. \\ \left.\left.\left.
-\tanh ^{-1}\left(\frac{1}{2} \left(1-i \sqrt{3} \tanh \left(\frac{1}{2} 3^{p+1}
   x\right)\right)\right)\right)\right)\right. \\ \left.
+\cosh \left(2\times 3^{-p-1} \left(\tanh ^{-1}\left(\frac{1}{2} \left(1-i \sqrt{3} \tanh\left(\frac{3^p x}{2}\right)\right)\right) \right.\right.\right. \\ \left.\left.\left.
-\tanh ^{-1}\left(\frac{1}{2} \left(1-i \sqrt{3} \tanh \left(\frac{1}{2} 3^{p+1}x\right)\right)\right)\right)\right)\right)\\
=\frac{2 \cosh (x)+1}{2 \cosh (x)-1}
\end{multline}
\end{example}
\begin{proof}
Use equation (\ref{eq:coscos}) set $x=3xi$ and analyze the right-hand side as $n\to \infty$ using Figure 12.
\end{proof}
\begin{figure}[H]
\includegraphics[scale=0.8]{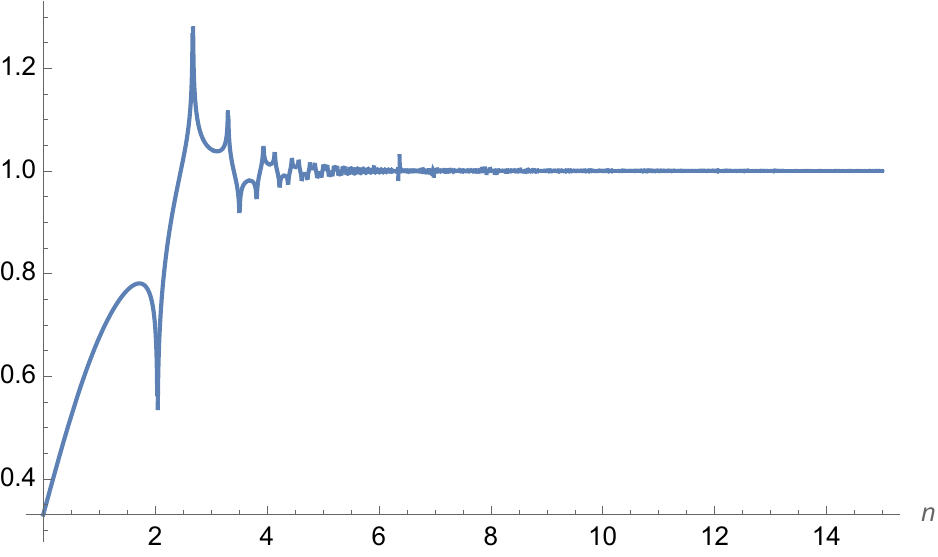}
\caption{Plot of  $\left(\tan \left(\frac{1}{2} 3^{n-1} x\right) \cot \left(\frac{3^n x}{2}\right)\right)^{3^{-n}}$, $x\in\mathbb{R}$.}
   \label{fig:fig3}
\end{figure}
\begin{example}
Finite product involving the Hurwitz-Lerch zeta function. 
\begin{multline}
\prod_{p=0}^{n-1}\exp \left(3^{-p} e^{-5 m 3^p} \left(e^{4 m 3^p} \Phi \left(e^{-2\times 3^{p+1} m},1,\frac{1}{6}\left(1+3^{-p}\right)\right)\right.\right. \\ \left.\left.
+\Phi \left(e^{-2\times 3^{p+1} m},1,\frac{1}{6}
   \left(5+3^{-p}\right)\right)\right)\right)\\
=2^{-3 e^m} (\coth (m)+1)^{3 e^m} \exp \left(-3^{1-n} e^{
   -m3^n} \Phi \left(e^{-2\times 3^n m},1,\frac{1}{2} \left(1+3^{-n}\right)\right)\right)
\end{multline}
\end{example}
\begin{proof}
Use equation (\ref{eq:theorem_cc}) and set $a=e^i$ and simplify in terms of the Polylogarithm function using equation (64:12:2) in \cite{atlas}. Next simplify the Polylogarithm function in terms of the Hurwitz zeta function using equation (6) in \cite{jonq}. Next take the limit of both sides as $k\to -1$ and simplify in terms of the log-gamma function using equation (64:10:2) in \cite{atlas}. Next take the exponential function of both sides and simplify.
\end{proof}
\begin{example}
Finite product involving the Hurwitz-Lerch zeta function.
\begin{multline}
\prod_{p=0}^{n-1}\exp \left(3^{-p} e^{i m 3^p} \left(\Phi \left(e^{2 i 3^{p+1} m},1,\frac{1}{6} \left(1+3^{n-p}\right)\right)
+e^{4 i m 3^p}\Phi \left(e^{2 i 3^{p+1} m},1,\frac{1}{6} \left(5+3^{n-p}\right)\right)\right)\right)\\
=\left(1-e^{2 i m 3^n}\right)^{3^{1-n}
   e^{-i m 3^n}} e^{3 e^{i m} \Phi \left(e^{2 i m},1,\frac{1}{2} \left(1+3^n\right)\right)}
\end{multline}
\end{example}
\begin{proof}
Use equation (\ref{eq:theorem_cc}) and set $a=e^{I 3^n}$ and simplify in terms of the Polylogarithm function using equation (64:12:2) in \cite{atlas}. Next simplify the Polylogarithm function in terms of the Hurwitz zeta function using equation (6) in \cite{jonq}. Next take the limit of both sides as $k\to -1$ and simplify in terms of the log-gamma function using equation (64:10:2) in \cite{atlas}. Next take the exponential function of both sides and simplify.
\end{proof}
\begin{example}
Finite series involving the Glaisher-Kinkelin constant $A$. 
\begin{multline}
\sum_{p=0}^{n-1}\frac{\csc ^2\left(\frac{\pi  3^p}{2}\right)}{8 \left(2 \cos \left(\pi 
   3^p\right)+1\right)^2} \left(2 \log \left(i 3^{p+1}\right) \left(3^p \left(5 \cos \left(\frac{\pi 
   3^p}{2}\right)+\cos \left(\frac{5 \pi  3^p}{2}\right)\right)\right.\right. \\ \left.\left.
-i \left(\sin \left(\frac{\pi  3^p}{2}\right)+\sin \left(\frac{5\pi  3^p}{2}\right)\right)\right)-2\times 3^{p+1} e^{-\frac{5}{2} i \pi  3^p} \left(-1+e^{i \pi  3^{p+1}}\right)^2 \right. \\ \left.
\left(\Phi'\left(-1,-1,\frac{1}{6} \left(3^{-p}+1\right)\right)+e^{2 i \pi  3^p}\Phi'\left(-1,-1,\frac{1}{6} \left(3^{-p}+5\right)\right)\right)\right)\\
=-3^n e^{\frac{1}{2} i \pi  3^n} \Phi'\left(-1,-1,\frac{1}{2}
   \left(3^{-n}+1\right)\right)+i \log \left(\frac{A^3}{\sqrt[3]{2} \sqrt[4]{e}}\right)\\
+\frac{\pi  \cos \left(\pi  3^n\right)+4\log \left(i 3^n\right) \left(3^n \cos \left(\frac{\pi  3^n}{2}\right)-i \sin \left(\frac{\pi  3^n}{2}\right)\right)-\pi }{8\left(\cos \left(\pi  3^n\right)-1\right)}
\end{multline}
\end{example}
\begin{proof}
Use equation (\ref{eq:theorem_cc}) and take the first derivative with respect to $k$ and set $k=1,m=\pi/2,a=e^{i}$ and simplify using equation (18) in \cite{guillera}.
\end{proof}
\begin{example}
Finite series involving Ap\'{e}ry's constant $\zeta(3)$.
\begin{multline}
\sum_{p=0}^{n-1}\csc ^3\left(\frac{1}{2} \pi  3^{p+1}\right) \left(4\times 9^{p+1} \Phi'\left(-1,-2,\frac{1}{6}
   \left(3^{-p}+1\right)\right)\right. \\ \left.
+4\times 9^{p+1} \Phi'\left(-1,-2,\frac{1}{6}
   \left(3^{-p}+5\right)\right)+\left(5\times 9^p-1\right) \log \left(i 3^{p+1}\right)\right)\\
=4\times 9^n \csc ^3\left(\frac{\pi 3^n}{2}\right) \Phi'\left(-1,-2,\frac{1}{2} \left(3^{-n}+1\right)\right)+\frac{1}{2}\left(9^n-1\right) \log \left(i 3^n\right) \csc ^3\left(\frac{\pi  3^n}{2}\right)\\
-\frac{7 \zeta (3)}{\pi ^2}
\end{multline}
\end{example}
\begin{proof}
Use equation (\ref{eq:theorem_cc}) and take the first derivative with respect to $k$ and set $k=2,m=\pi/2,a=0$ and simplify using equation (19) in \cite{guillera}.
\end{proof}
\begin{example}
Finite series involving Catalan's constant $C$.
\begin{multline}
\sum_{p=0}^{n-1}3^p e^{-\frac{5}{2} i \pi  3^p} \csc ^2\left(\frac{1}{2} \pi  3^{p+1}\right) \left(-12
   \left(\Phi'\left(-1,-1,\frac{1}{6}\right)+\Phi'\left(-1,-1,\frac{5}{6}\right)\right)\right. \\ \left.
+e^{\frac{5}{2} i \pi  3^p} \log \left(i 3^{p+1}\right) \left(5 \cos \left(\frac{\pi  3^p}{2}\right)+\cos
   \left(\frac{5 \pi  3^p}{2}\right)\right)\right)\\
=\frac{2 \left(4 C \left(3^n e^{\frac{1}{2} i \pi  3^n} \sin ^2\left(\frac{\pi 3^n}{2}\right)-i\right)+\pi  3^n \log \left(i 3^n\right) \cos \left(\frac{\pi  3^n}{2}\right)\right)}{\pi  \left(\cos \left(\pi3^n\right)-1\right)}
\end{multline}
\end{example}
\begin{proof}
Use equation (\ref{eq:theorem_cc}) and take the first derivative with respect to $k$ and set $k=1,m=\pi/2,a=0$ and simplify using equation (20) in \cite{guillera}.
\end{proof}
%
%
\section{Table of results: Part III}
In this section we evaluate Theorem (\ref{eq:theorem_ss1}) for various values of the parameters involved to derive related closed form formulae in terms of special functions and fundamental constants.
\begin{example}
Finite product of quotient gamma functions.
\begin{multline}
\prod_{p=0}^{n-1}\left(1-\frac{2\times 3^{p+1}}{a}\right)^{2\times 3^{-p-1}} \left(\frac{\Gamma \left(\frac{1}{12} \left(3^{-p}
   a-6\right)\right)}{\Gamma \left(\frac{1}{4} 3^{-p-1} a\right)}\right)^{2\times 3^{-p-1}}\\
 \left(\frac{3^{-\frac{1}{4} a
   3^{-p}-2 p+\frac{3}{2}} \left(a^2-4 a 3^{p+1}+32\times 9^p\right) \Gamma \left(\frac{3^{-p} a}{4}-3\right)}{\Gamma
   \left(\frac{1}{4} 3^{-p-1} a-1\right) \Gamma \left(\frac{1}{12} \left(3^{-p} a+2\right)\right) \Gamma
   \left(\frac{1}{12} \left(3^{-p} a+10\right)\right)}\right)^{1-3^{-p}}\\
=\frac{2^{\frac{3}{2} \left(2
   n+3^{1-n}-3\right)} 3^{\frac{1}{4} \left(-2 n-3^{1-n}+3\right)} \pi ^{\frac{3}{2} \left(1-3^{-n}\right)-n}
   \left(1-\frac{2\times 3^n}{a}\right)^{-3^{-n}} }{a}\\
   \left(a-2\times 3^n\right) \left(\frac{\Gamma \left(\frac{1}{4} \left(3^{-n}a-2\right)\right)}{\Gamma \left(\frac{3^{-n} a}{4}\right)}\right)^{1-3^{-n}}
\end{multline}
\end{example}
\begin{proof}
Use equation (\ref{eq:theorem_ss1}) and set $m=0$ and simplify in terms of the Hurwitz zeta function using entry (4) in the Table below equation (64:12:7) on page 692 in \cite{atlas}. Next take the first partial derivative with respect to $k$ and set $k=0$ and simplify in terms of the log-gamma function using equation (64:10:2) in \cite{atlas}. Finally take the exponential function of both sides and simplify both sides to yield the stated result.
\end{proof}
\begin{example}
Finite product of quotient gamma functions.
\begin{multline}
\prod_{p=0}^{n-1}\frac{\left(\left(a 3^{-p}-4\right) \left(a 3^{-p}-2\right) \left(a 3^{-p-1} \Gamma \left(\frac{1}{2} 3^{-p-1}a\right)\right)^{2/3} \Gamma \left(\frac{1}{6} \left(3^{-p} a-4\right)\right) \Gamma \left(\frac{1}{6}
   \left(3^{-p} a-2\right)\right)\right)^{1-3^{-p}}}{\Gamma \left(\frac{1}{2} 3^{-p-1} a\right)^{2/3}}\\
=2^{\frac{3}{2} \left(2 n+3^{1-n}-3\right)} \pi ^{\frac{3}{2} \left(3^{-n}-1\right)+n} 3^{\frac{1}{16} 9^{-n} \left(4\times 3^n \left(2\left(5\times 3^n-2\right) n-9 \left(3^n-1\right)\right)-3 a \left(-4\times 3^n+9^n+3\right)\right)-\frac{1}{3} n (n+1)}
   a^{2 n/3} \\
\left(a \Gamma \left(\frac{3^{-n} a}{2}\right)\right)^{3^{-n}-1}
\end{multline}
\end{example}
\begin{proof}
Use equation (\ref{eq:theorem_ss}) and set $m=\pi/2$ and simplify in terms of the Hurwitz zeta function using entry (4) in the Table below equation (64:12:7) on page 692 in \cite{atlas}. Next take the first partial derivative with respect to $k$ and set $k=0$ and simplify in terms of the log-gamma function using equation (64:10:2) in \cite{atlas}. Finally take the exponential function of both sides and simplify both sides to yield the stated result.
\end{proof}
%
%
%
%
%
\begin{example}
Functional equation for the Hurwtiz-Lerch zeta function.
\begin{multline}
\Phi (z,s,a)
=3^{-2 s-1} \left(3^s \left(3 \Phi \left(z^3,s,\frac{a}{3}\right)+z \left(3 \Phi
   \left(z^3,s,\frac{a+1}{3}\right)-z \Phi \left(z^3,s,\frac{a+2}{3}\right)\right)\right)\right. \\ \left.
+4 z^2 \left(\Phi
   \left(z^9,s,\frac{a+2}{9}\right)+z^6 \Phi \left(z^9,s,\frac{a+8}{9}\right)+z^3 \Phi
   \left(z^9,s,\frac{a+5}{9}\right)\right)\right)
\end{multline}
\end{example}
\begin{proof}
Use equation (\ref{eq:theorem_ss1}) and set $n=3,m=\frac{\log(-z)}{6i},a=6 (a i - 1),k=-s$ and simplify.
\end{proof}
\begin{example}
Finite product of quotient cosine functions. 
\begin{multline}
\prod_{p=0}^{n-1}\left(\frac{\left(-1+2 \cosh \left(2\times 3^p m\right)\right) \left(\frac{\cosh \left(3^p r\right)}{\cosh \left(3^p
   m\right)}\right)^2}{-1+2 \cosh \left(2\times 3^p r\right)}\right)^{3^{1-2 p} \left(-1+3^p\right)} \left(\frac{\cosh
   \left(3^{1+p} m\right)}{\cosh \left(3^{1+p} r\right)}\right)^{2\times 3^{-2 p}}\\
=\left(\frac{\cosh \left(3^n
   m\right)}{\cosh \left(3^n r\right)}\right)^{9^{1-n} \left(-1+3^n\right)}
\end{multline}
\end{example}
\begin{proof}
Use equation (\ref{eq:theorem_ss1}) and form a second equation by replacing $m\to r$. Next take the differnce of the two equations and simplify. Then set $k=-1,a=1$ and simplify in terms of the logarithm function using entry (5) in the Table below equation (64:12:7) on page 692 in \cite{atlas}. Next take the exponential function of both sides and simplify to yield the stated result.
\end{proof}
\begin{example}
Finite product of quotient cosine functions.
\begin{multline}
\prod_{p=0}^{n-1}\left(\frac{1-2 \cosh \left(2\times 3^p x\right)}{1-2 \cosh \left(2\times 3^{-1+p} x\right)}\right)^{\frac{1}{2} 9^{-p}
   \left(-1+3^{1+p}\right)} \left(\frac{\cosh \left(3^{-1+p} x\right)}{\cosh \left(3^p x\right)}\right)^{9^{-p}
   \left(-4+3^{1+p}\right)}\\
=\left(\frac{\cosh \left(3^n x\right)}{\cosh \left(3^{-1+n} x\right)}\right)^{\frac{1}{2}
   9^{1-n} \left(-1+3^n\right)}
\end{multline}
\end{example}
\begin{proof}
Use equation (\ref{eq:theorem_ss1}) and set $k=1,a=0,m=x$ and apply the method in section (8) in \cite{reyn_ejpam}.
\end{proof}
\subsection{Finite product involving polynomials}
Finite products involving polynomials are used in the study of the Thue-Morse constant \cite{zorin,allouche}, where this constant is not a badly approximable number. In his renowned 1936 publication, Alan Turing explored the concept of the computability of real numbers. In simpler terms, a real number, denoted as $\alpha$, is deemed computable when there exists a Turing machine capable of generating a rational approximation to $\alpha$, which is accurate within a margin of $2^{-i}$, where $i$ represents the input value. The finite product involving polynomials is used to study the transcendence of the Thue–Morse Number on page 388 in \cite{allouche}, where a proof using a theorem on analytic functions was employed.
\begin{example}
Finite product involving polynomial functions.
\begin{multline}\label{eq:finite_poly}
\prod_{p=0}^{n-1}\left(z^{3^p}+1\right)^{3^{-2 p-1}} \left(\frac{\sqrt{\left(z^{3^{p-1}}-1\right)
   z^{3^{p-1}}+1}}{z^{3^{p-1}}+1}\right)^{3^{-2 p} \left(3^p-1\right)}\\
   =\left(z^{3^{n-1}}+1\right)^{\frac{1}{2} 3^{1-2n} \left(3^n-1\right)}\\
   =\sum_{p=0}^{\infty}\binom{\frac{1}{2} 3^{1-2 n} \left(-1+3^n\right)}{p}
   \left(z^{3^{n-1}}\right)^{\frac{1}{2} 3^{1-2 n} \left(3^n-1\right)-p}
\end{multline}
\end{example}
\begin{proof}
Use equation (\ref{eq:theorem_ss1}) and set $a=0,m=\log(z)/(6i)$ and simplify in terms of the Polylogarithm function using equation (64:12:2) in \cite{atlas}. Next simplify the Polylogarithm function in terms of the Hurwitz zeta function using equation (6) in \cite{jonq}. Next take the limit of both sides as $k\to -1$ and simplify in terms of the log-gamma function using equation (64:10:2) in \cite{atlas}. Next take the exponential function of both sides and simplify.
\end{proof}
%
%
%
%
\begin{figure}[H]
\includegraphics[scale=0.8]{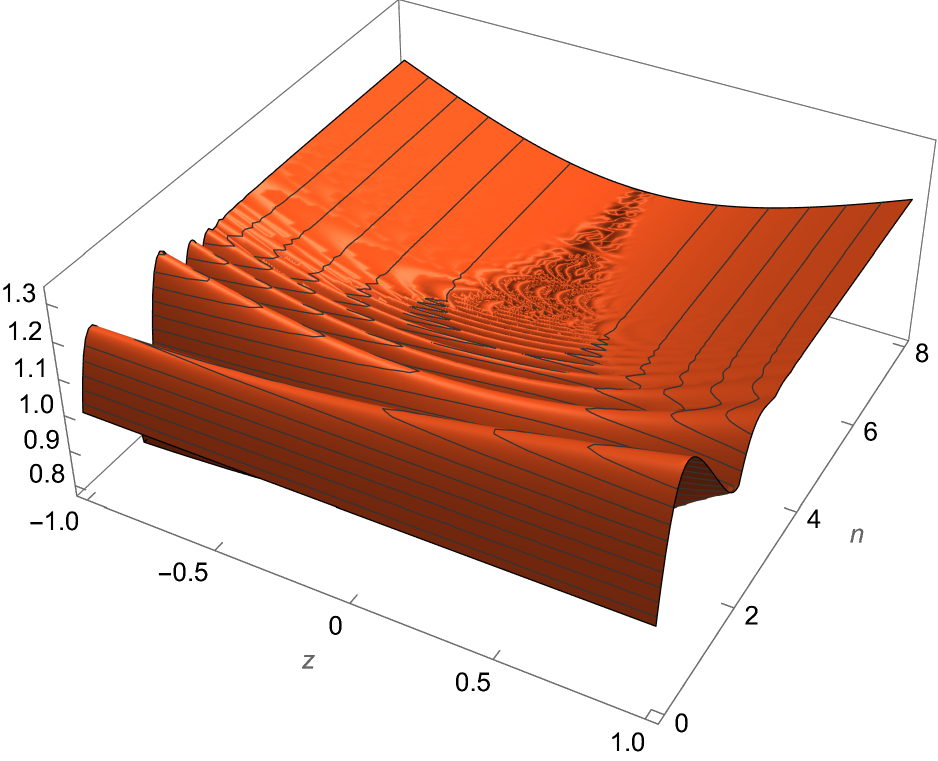}
\caption{Plot of  $f(z)=Re\left(\left(z^{3^{n-1}}+1\right)^{\frac{1}{2} 3^{1-2 n} \left(3^n-1\right)}\right)$, $z\in\mathbb{C}$.}
   \label{fig:fig2}
\end{figure}
\begin{figure}[H]
\includegraphics[scale=0.8]{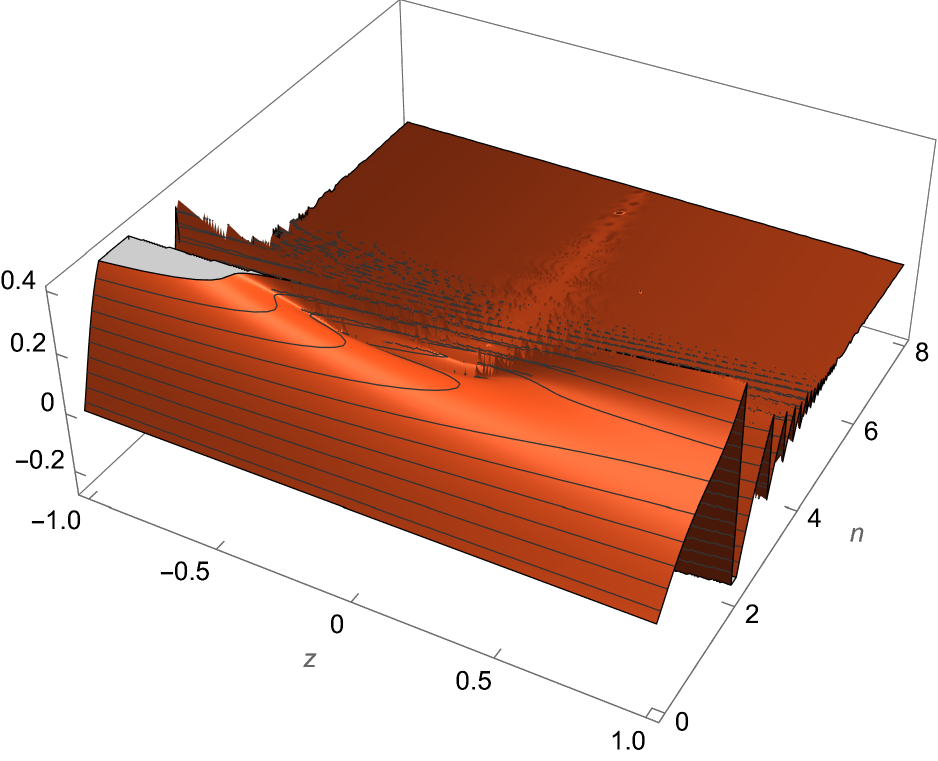}
\caption{Plot of  $f(z)=Im\left(\left(z^{3^{n-1}}+1\right)^{\frac{1}{2} 3^{1-2 n} \left(3^n-1\right)}\right)$, $z\in\mathbb{C}$.}
   \label{fig:fig2}
\end{figure}
%
\begin{figure}[H]
\includegraphics[scale=0.8]{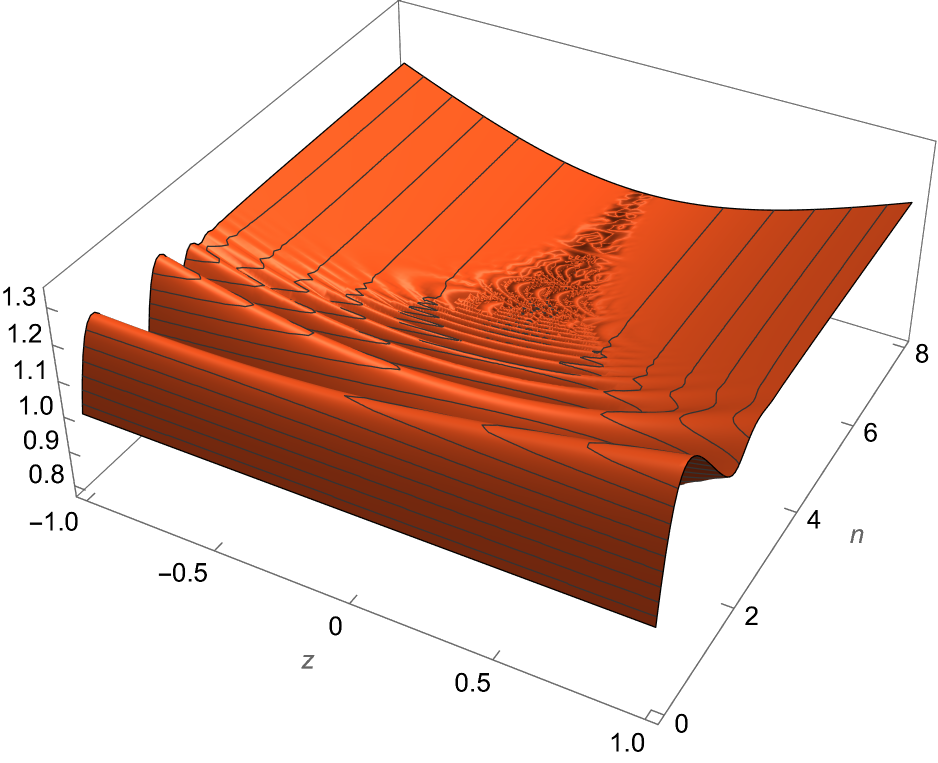}
\caption{Plot of  $f(z)=Abs\left(\left(z^{3^{n-1}}+1\right)^{\frac{1}{2} 3^{1-2 n} \left(3^n-1\right)}\right)$, $z\in\mathbb{C}$.}
   \label{fig:fig2}
\end{figure}
%
\begin{example}
Infinite product involving polynomial functions. 
\begin{equation}
\prod_{p=0}^{\infty}\left(z^{3^p}+1\right)^{3^{-2 p-1}} \left(\frac{\sqrt{\left(z^{3^{p-1}}-1\right)
   z^{3^{p-1}}+1}}{z^{3^{p-1}}+1}\right)^{3^{-2 p} \left(3^p-1\right)}=1
\end{equation}
\end{example}
\begin{proof}
Use equation (\ref{eq:finite_poly}) and take the limit of the right-hand side as $n\to \infty$ and simplify.
\end{proof}
\begin{example}
Finite sum of the digamma function in terms of Catalan's constant $C$. 
\begin{multline}
\sum_{p=0}^{n-1}27^{-p} \left(\psi ^{(1)}\left(1-\frac{1}{4} 3^{n-p-1}\right)+\frac{3}{2}
   \left(3^p-1\right) \left(\psi ^{(1)}\left(\frac{5}{6}-\frac{1}{4}
   3^{n-p-1}\right)-\psi ^{(1)}\left(\frac{1}{12}
   \left(4-3^{n-p}\right)\right)\right)\right. \\ \left.
-\psi ^{(1)}\left(\frac{1}{12}
   \left(6-3^{n-p}\right)\right)+\frac{3}{2} \left(3^p-1\right) \left(\psi
   ^{(1)}\left(\frac{1}{12} \left(2-3^{n-p}\right)\right)-\psi
   ^{(1)}\left(\frac{1}{12} \left(8-3^{n-p}\right)\right)\right)\right)\\
=-8 C
   3^{3-3 n} \left(3^n-1\right)
\end{multline}
\end{example}
\begin{proof}
Use equation (\ref{eq:theorem_ss1}) and set $k=-2,m=0,a=-3^n$ and simply using equation (21) in \cite{guillera}.
\end{proof}

\begin{example}
Finite series in terms of the  Glaisher-Kinkelin constant $A$. 
\begin{multline}
\sum_{p=0}^{n-1}\left(6 \left(3^p-1\right)
   \left(\Phi'\left(-1,-1,\frac{1}{3}\right)-\Phi'\left(-1,-1,\frac{2}{3}\right)\right)+3^p \log \left(i 3^{p+1}\right)\right)\\
=\frac{1}{6} \left(\left(-2 n+3^n-1\right) \log \left(\frac{16e^3}{A^{36}}\right)+3 \left(3^n-1\right) \log \left(i 3^n\right)\right)
\end{multline}
\end{example}
\begin{proof}
Use equation (\ref{eq:theorem_ss1}) and take the first derivative with respect to $k$ and set $k=1,m=0,a=0$ and simplify using equation (18) in \cite{guillera}.
\end{proof}
\begin{example}
Finite series in terms of the  Apery's constant $\zeta(3)$.
\begin{multline}
\sum_{p=0}^{n-1}3^p \left(3^p-1\right)
   \left(\Phi'\left(-1,-2,\frac{1}{3}\right)-\Phi'\left(-1,-2,\frac{2}{3}\right)\right)=-\frac{7 \left(-4\times 3^n+9^n+3\right) \zeta (3)}{36 \pi ^2}
\end{multline}
\end{example}
\begin{proof}
Use equation (\ref{eq:theorem_ss1}) and take the first derivative with respect to $k$ and set $k=2,m=0,a=0$ and simplify using equation (19) in \cite{guillera}.
\end{proof}
\begin{example}
Finite sum of the digamma function in terms of Catalan's constant $C$.
\begin{multline}
\sum_{p=0}^{n-1}\left(12 \Phi'\left(-1,-1,1-\frac{1}{2} 3^{n-p-1}\right)-18 \left(3^p-1\right)
   \Phi'\left(-1,-1,\frac{1}{6} \left(2-3^{n-p}\right)\right)\right. \\ \left.
+18 \left(3^p-1\right)
   \Phi'\left(-1,-1,\frac{1}{6} \left(4-3^{n-p}\right)\right)+3^{-p} \left(3^n-3^{2
   p+1}\right) \log \left(i 3^{p+1}\right)\right)\\
=\frac{6 C \left(3^n-1\right)}{\pi }
\end{multline}
\end{example}
\begin{proof}
Use equation (\ref{eq:theorem_ss1}) and take the first derivative with respect to $k$ and set $k=1,m=0,a=-3^n$ and simplify using equation (20) in \cite{guillera}.
\end{proof}
\newpage
\begin{center}
\begin{table}[h]
\setlength{\tabcolsep}{0.03pt} 
\renewcommand{\arraystretch}{2} 
\begin{tabular}{ c | c }
\hline
 $\prod\limits_{p=0}^{n-1}\left(\frac{9^{p+1} \Gamma \left(3^{-p-1} a+\frac{1}{6}\right) \Gamma
   \left(3^{-p-1} a+\frac{5}{6}\right) \left(\frac{\Gamma \left(3^{-p-1}
   a+\frac{1}{2}\right)}{\Gamma \left(3^{-p-1}
   a+1\right)}\right)^{2/3}}{\left(3^p-a\right) \left(2\times 3^p-a\right) \Gamma
   \left(\frac{1}{3} \left(3^{-p} a-2\right)\right) \Gamma \left(\frac{1}{3}
   \left(3^{-p} a-1\right)\right)}\right)^{3^{-p}}$ &  $\frac{3^{\frac{3}{4}
   \left(1-3^{-n}\right)} \Gamma \left(a+\frac{1}{2}\right) \left(\frac{\Gamma
   \left(3^{-n} a+1\right)}{\Gamma \left(3^{-n}
   a+\frac{1}{2}\right)}\right)^{3^{-n}}}{\Gamma (a+1)}$ \\ 
 $\prod\limits_{p=0}^{\infty}\left(\frac{9^{p+1} \Gamma \left(3^{-p-1} a+\frac{1}{6}\right) \Gamma
   \left(3^{-p-1} a+\frac{5}{6}\right) \left(\frac{\Gamma \left(3^{-p-1}
   a+\frac{1}{2}\right)}{\Gamma \left(3^{-p-1}
   a+1\right)}\right)^{2/3}}{\left(3^p-a\right) \left(2\times 3^p-a\right) \Gamma
   \left(\frac{1}{3} \left(3^{-p} a-2\right)\right) \Gamma \left(\frac{1}{3}
   \left(3^{-p} a-1\right)\right)}\right)^{3^{-p}}$ & $\frac{3^{3/4} \Gamma
   \left(a+\frac{1}{2}\right)}{\Gamma (a+1)}$  \\  
 $\prod\limits_{p=0}^{n-1} \left(\frac{\left(\frac{\cos \left(3^p m\right)}{\cos \left(3^p r\right)}\right)^{16} \left(1-2 \cos \left(2\times 3^p r\right)\right)^2}{\left(1-2 \cos \left(2\times 3^p m\right)\right)^2}\right)^{3^{-2
   p}}$ & $\left(\frac{\cos (m)}{\cos (r)}\right)^{18} \left(\frac{\cos \left(3^n r\right)}{\cos \left(3^n m\right)}\right)^{2\times 3^{2-2 n}}$\\ 
 $\prod\limits_{p=0}^{\infty}\left(\frac{\left(\frac{\cos \left(3^p m\right)}{\cos \left(3^p
   r\right)}\right)^{16} \left(1-2 \cos \left(2\times 3^p
   r\right)\right)^2}{\left(1-2 \cos \left(2\times 3^p
   m\right)\right)^2}\right)^{\frac{3^{-2 p}}{18}}$ & $\frac{\cos (m)}{\cos
   (r)}$\\ 
 $\prod\limits_{p=0}^{\infty}\left(\frac{3^{a 3^{-p}+2 p} \Gamma \left(3^{-p-1} a-1\right)}{\left(a^2-a
   3^{p+1}+2\times 9^p\right) \Gamma \left(3^{-p-1} a+1\right)^{2/3} \Gamma
   \left(3^{-p} a-3\right)}\right)^{3^{-p}}$ & $\frac{3^{\frac{9 a}{8}+3}}{\Gamma
   (a+1)}$\\ 
 $\prod\limits_{p=0}^{n-1}\frac{\Gamma \left(\frac{1}{6} \left(3^{-p} a+1\right)\right) \Gamma \left(\frac{1}{6} \left(3^{-p}
   a+5\right)\right)}{2 \pi }$ & $\frac{3^{-\frac{1}{4} a 3^{1-n} \left(3^n-1\right)} \Gamma
   \left(\frac{a+1}{2}\right)}{\Gamma \left(\frac{1}{2} \left(3^{-n} a+1\right)\right)}$\\ 
 $\prod\limits_{p=0}^{\infty}\frac{\Gamma \left(\frac{1}{6} \left(3^{-p} a+1\right)\right) \Gamma \left(\frac{1}{6} \left(3^{-p}
   a+5\right)\right)}{2 \pi }$ & $\frac{3^{-3 a/4} \Gamma \left(\frac{a+1}{2}\right)}{\sqrt{\pi }}$\\  
 $\prod\limits_{p=0}^{\infty}\left(\frac{\left(1+2 \cosh \left(2\times 3^p m\right)\right) \left(-1+2 \cosh \left(2\times 3^p r\right)\right)}{\left(-1+2 \cosh \left(2\times 3^p m\right)\right) \left(1+2 \cosh \left(2\times 3^p
   r\right)\right)}\right)^{3^{-p}} \left(\frac{\tanh \left(3^p r\right)}{\tanh \left(3^p m\right)}\right)^{2\times 3^{-p}}$ & $\left(\frac{\tanh (r)}{\tanh (m)}\right)^3$\\  
 $\prod\limits_{p=0}^{n-1}\left(\frac{\Gamma \left(\frac{1}{12} \left(3^{-p} a+7\right)\right) \Gamma \left(\frac{1}{12} \left(3^{-p}
   a+11\right)\right)}{\Gamma \left(\frac{1}{4} 3^{-p-1} \left(a+3^p\right)\right) \Gamma \left(\frac{1}{12}
   \left(3^{-p} a+5\right)\right)}\right)^{(-1)^p}$ & $\frac{3^{\frac{1}{4} \left((-1)^n-1\right)} \Gamma
   \left(\frac{a+3}{4}\right) \left(\frac{\Gamma \left(\frac{1}{4} \left(3^{-n} a+1\right)\right)}{\Gamma
   \left(\frac{1}{4} \left(3^{-n} a+3\right)\right)}\right)^{(-1)^n}}{\Gamma \left(\frac{a+1}{4}\right)}$\\  
 $\sum\limits_{p=0}^{n-1}9^{-p} \left(\psi ^{(1)}\left(\frac{1}{12} \left(2+3^{-p}\right)\right)+\psi ^{(1)}\left(\frac{1}{12} \left(10+3^{-p}\right)\right)\right)$ & $9 \left(-8 C-9^{-n} \psi
   ^{(1)}\left(\frac{1}{4} \left(2+3^{-n}\right)\right)+\pi ^2\right)$\\     
 $\prod\limits_{p=0}^{n-1}\left(\frac{\left(e^{-2 m 3^{p+1}}+1\right)^{2/3} \cosh ^2\left(m 3^p\right)}{2 \cosh \left(2 m 3^p\right)-1}\right)^{3^{-2
   p}}$ & $\frac{2^{\frac{3}{4} \left(1+3^{1-2 n}\right)} \cosh ^3(m)}{e^{3 m} \left(1+e^{-2 3^n
   m}\right)^{3^{1-2 n}}}$\\     
 $\prod\limits_{p=0}^{\infty}\left(\frac{\left(e^{-2 m 3^{p+1}}+1\right)^{2/3} \cosh ^2\left(m 3^p\right)}{2 \cosh \left(2 m 3^p\right)-1}\right)^{3^{-2
   p-1}}$ & $\frac{e^{-2 m}+1}{2^{3/4}}$\\   
 $\prod\limits_{p=0}^{n-1}\left(\frac{\left(-1+2 \cosh \left(2\times 3^p m\right)\right) \left(\frac{\cosh \left(3^p r\right)}{\cosh \left(3^p
   m\right)}\right)^2}{-1+2 \cosh \left(2\times 3^p r\right)}\right)^{3^{1-2 p} \left(-1+3^p\right)} \left(\frac{\cosh
   \left(3^{1+p} m\right)}{\cosh \left(3^{1+p} r\right)}\right)^{2\times 3^{-2 p}}$ & $\left(\frac{\cosh \left(3^n
   m\right)}{\cosh \left(3^n r\right)}\right)^{9^{1-n} \left(-1+3^n\right)}$\\ 
    $\prod\limits_{p=0}^{n-1}\left(\frac{1-2 \cosh \left(2\times 3^p x\right)}{1-2 \cosh \left(2\times 3^{-1+p} x\right)}\right)^{\frac{1}{2} 9^{-p}
   \left(-1+3^{1+p}\right)} \left(\frac{\cosh \left(3^{-1+p} x\right)}{\cosh \left(3^p x\right)}\right)^{9^{-p}
   \left(-4+3^{1+p}\right)}$ & $\left(\frac{\cosh \left(3^n x\right)}{\cosh \left(3^{-1+n} x\right)}\right)^{\frac{1}{2}
   9^{1-n} \left(-1+3^n\right)}$\\ 
 $\prod_{p=0}^{n-1}\left(z^{3^p}+1\right)^{3^{-2 p-1}} \left(\frac{\sqrt{\left(z^{3^{p-1}}-1\right)
   z^{3^{p-1}}+1}}{z^{3^{p-1}}+1}\right)^{3^{-2 p} \left(3^p-1\right)}$ & $\left(z^{3^{n-1}}+1\right)^{\frac{1}{2} 3^{1-2n} \left(3^n-1\right)}$\\ 
     \hline 
\end{tabular}
\caption{Table of sums and products}
\label{table:kysymys}
\end{table}
\end{center}
\newpage
%
%
%
\section{Conclusion}
In this article, we've introduced an approach for obtaining formulae for finite and infinite sums and products that incorporate the Hurwitz-Lerch zeta function. We've also explored intriguing specific instances through a combination of contour integration and established algebraic methods. Our intention is to employ these techniques in forthcoming research to derive additional formulae for other sums and products involving various special functions.
\end{document}